\newtheorem{thm}{Theorem}
\newtheorem{theorem}[thm]{Theorem}
\newtheorem{corollary}[thm]{Corollary}
\newtheorem{lemma}[thm]{Lemma}
\theoremstyle{definition}
\newtheorem{example}{Example}
\theoremstyle{remark}
\newtheorem{remark}{Remark}
\providecommand{\qr}{\eqref}
\renewcommand{\d}{\,d}
\providecommand{\RR}{\mathbb{R}}
\providecommand{\ZZ}{\mathbb{Z}}
\providecommand{\VV}{\mathbb{V}}
\providecommand{\EE}{\mathsf{E}}
\providecommand{\CF}{\mathscr{F}}
\providecommand{\mscr}{\mathscr}
\providecommand{\mc}{\mathcal}
\providecommand{\mb}{\mathbf}
\providecommand{\mbb}{\mathbb}
\providecommand{\opn}{\operatorname}
\def\ii#1{^{(#1)}}
\providecommand{\E}{\mathsf{E}}
\providecommand{\Ex}[1]{\E\left(#1\right)}
\providecommand{\var}{\opn{var}}
\providecommand{\msf}{\mathsf}
\providecommand{\ett}{\mathsf{1}}
\providecommand{\Tr}{\opn{Tr}}
\providecommand{\Ker}{\opn{Ker}}
\providecommand{\Ordo}[1]{{O(#1)}}
\providecommand{\tl}{\tilde}
\providecommand{\scp}[2]{\left\langle#1,#2\right\rangle}
\providecommand{\T}{^{\!\mathrm{T}}}
\renewcommand{\L}{\mathscr{L}}
\def\EE{\mbb{E}}
\def\B{\mscr{B}}
\def\A{\opn{\msf{A}}}
\def\a{A}
\def\Ex#1{\EE\left[#1\right]}
\def\X{\mscr X}
\def\T{\opn{\msf{T}}}
\def\M{\opn{M}}
\def\LL{\mbb L}
\def\E{{\opn{\mscr{E}}}}
\def\R{\mscr R}
\def\Q{\opn{Q}}
\def\L{\opn{\mscr L}}
\def\scp#1#2{\left\langle #1 , #2 \right\rangle}
\def\norm#1{\left\| {#1} \right\|}
\def\Scp#1#2{\left( #1 \mid #2 \right)}
\def\scpe#1#2{\left\langle {#1} , {#2} \right\rangle_{\E}}
\def\norme#1{\left\| {#1} \right\|_{\E}}
\def\al{\alpha}
\def\Al{\A(\al)}
\begin{document}
\title{Ergodic Theory of Kusuoka Measures}
\author{Anders Johansson\footnote{Department of Mathematics,
    University of G\"avle, 801 76 G\"avle, Sweden\newline email:
    ajj@hig.se}, Anders \"Oberg\footnote{Department of Mathematics,
    Uppsala University, P.O. Box 480, 751 06, Uppsala, Sweden\newline
    email: anders@math.uu.se} and Mark Pollicott\footnote{Mathematics
    Institute, University of Warwick, Coventry, CV4 7AL, UK\newline
    email: mpollic@maths.warwick.ac.uk}} \date{\today}
\maketitle

\begin{abstract}\noindent
{\bf Abstract.}
  In the analysis on self-similar fractal sets, the Kusuoka measure
  plays an important role (cf.  \cite{kusuoka2}, \cite{kajino},
  \cite{str3}). Here we investigate the Kusuoka measure from an
  ergodic theoretic viewpoint, seen as an invariant measure on a symbolic space.
  Our investigation shows that the Kusuoka
  measure generalizes Bernoulli measures and their properties to
  higher dimensions of an underlying finite dimensional vector space. 
  Our main result is that the transfer operator on functions has a 
  spectral gap when restricted to a certain Banach space that contains the
  H\"older continuous functions, as well as the highly discontinuous $g$-function
  associated to the Kusuoka measure. As a consequence, we obtain
  exponential decay of correlations. In addition, we provide some
  explicit rates of convergence for a family of generalized Sierpi\'nski
  gaskets.
\end{abstract}
\tableofcontents
\section{Introduction}

\subsection{Background and problems}
The Kusuoka measure has recently attracted some attention, since it
gives rise to a well-working Laplacian on fractal sets (see, e.g.,
\cite{str}). The Laplacian is usually defined weakly with respect to a
measure on the fractal set. A standard way of accomplishing this is to
first define a Dirichlet energy form $\E(f,f)$ on the fractal $K$, in
analogy with $\int |\nabla f|^2\d\mu(x)$, and then to define the
Laplacian by equating the corresponding bilinear form ${\mc E} (u,v)$
with $-\int \left(\Delta_{\mu}u\right)v d\mu$, for functions $v$
vanishing on the boundary.  It is well-known that with respect to the
normalized Hausdorff measure, the domain of the Laplacian is not even
closed under multiplication. By contrast, the Kusuoka measure is
well-behaved in this sense and in some other more subtler ways, e.g.,
for the Laplacian it provides Gaussian heat kernel estimates with
respect to the effective resistance metric and can be regarded as a
second order differential operator \cite{kigami2}.

Recently, Strichartz and his collaborators (\cite{str3}, \cite{str2})
have proved some basic properties of the Kusuoka measure. Here we
provide an investigation of the Kusuoka measure from the point of view
of ergodic theory on symbolic shift spaces. 
For instance, we provide exponential mixing results
as a consequence of the quasi-compactness of a transfer operator as it
acts on a Banach space which contains functions that may have a dense
set of discontinuity points, but which can be regarded as ``smooth''
when they are integrated with respect to the Kusuoka measure. In fact, the associated 
  transfer operator is given by a simple multiplication when acting on a 
  certain space of matrix-valued processes.   However,  when restricting the transfer 
  operator to ordinary functions, 
  the corresponding transition probability
  function has a dense set of discontinuity points, which presents
  difficulties.

Our abstract way of treating the Kusuoka measure is rather similar to
the one in the original work by Kusuoka (\cite{kusuoka2}) and covers
in fact a general class of measures that can be defined by
products of matrices. We point out that the Kusuoka measure is really
a family of measures that generalizes the Bernoulli measures to higher
dimensions. We also note that the theory of matrix product state
representations of quantum Potts models (see e.g.\ \cite{mps}) seems
to be quite related, although we have not used any particular result
from this theory.

We believe that our analysis opens the door to interesting further research.  For example, it should now be possible to compute the
entropy of the measure explicitly. In view of our exponential
mixing results, it should then be possible to provide a multifractal
formalism for the Kusuoka measure. 
A major challenge would be to
generalise the type of results we provide here for matrices (as our
restriction maps) to infinite dimensional operators.
Using infinite dimensional operators, one could hope to be able treat
the Kusuoka measure on fractal sets with infinite boundaries, such as that of the
Sierpi\'nski carpet. However, it is not immediately clear how one should
define the Kusuoka measure even in the case of the Vicsek set, which
has a countably infinite boundary.

Other challenges in the fractal realm would include, e.g., the
problem of relating our results to the Cartesian product of a
Sierpi\'nski gasket with itself, or if one glues together the boundary
points of two such copies, producing a ``fractafold''.

\subsection{Summary of the main results}
We prove quasicompactness of a transfer operator defined on a Banach space, 
with a norm that is 
an integrated H\"older norm in terms the variations of functions on cylinder sets 
of a symbolic space.
In some sense, we are studying the transfer operator of a space of ``Besov'' type, 
since the moral is that we look at a ``smooth'' space that may have many 
discontinuities (since we integrate), and this is necessary in order to handle
the dense set of discontinuity points of the $g$-function that defines the transfer operator. 

To be more precise: Let $S$ be a finite set and let 
$\X$ denote the symbolic space $\X=S^{\ZZ_+}$ 
($\ZZ_+=\{0,1,2,\dots\}$) of functions $x:\ZZ_+\to S$.
The (point)
shift map $\T:\X\to\X$ is defined as $(\T x)(n) = x(n+1)$. 
In our abstract setting, the Kusuoka measure $\nu$ (\cite{kusuoka2}) is a shift-invariant
measure on the space $\X$. 
The transfer operator $L$ is the dual of the shift operator $\T f = f\circ T$ 
on the Hilbert-space of functions $L^2(\X,\nu)$.  It has the form
$$ Lf(x) = \sum_{s\in S} g(sx) f(sx) $$
where the \emph{$g$-function} can be defined as 
$$
g(x) = \lim_{n\to\infty} \frac{\nu([x]_n)}{ \nu\left([\T x]_{n-1}\right)},
$$
where $[x]_n$ denote the cylinder of length $n$ containing $x$. 

Given a real number $\gamma$, $0 < \gamma < 1$, we define for
$f\in L^2(\X,\nu)$ 
a Banach space $L^2_\gamma \subset L^2$ with norm ${\norm f}_{L^2_\gamma}$ by
$$
  {\norm f}_{L^2_\gamma} = 
  \sum_{n=0}^\infty \gamma^{-n} \|{f\ii n}\|_{L^2}.
$$
Here $(f\ii n)_{n=0}^\infty$
is the martingale difference sequence for $f$ 
given by $f\ii 0 = f_0$ and $f\ii n(x) = f_n(x) - f_{n-1}(x)$ for
$n\geq 1$, and where $x\mapsto f_n(x):=f([x]_n)$ is the orthogonal projection of $f$ onto the
finite dimensional subspace $L^2_n$ of $L^2(\X,\nu)$ of $\CF_n$-measurable functions, where $\CF_n$ is the $\sigma$-algebra generated by cylinder sets with length $n$.

Quasicompactness of $L$ on our ``Besov space'' $L_\gamma^2$, means that there exists $0 < \rho < 1$ such that for any $f\in L_\gamma^2$, where $\gamma$ is sufficiently close to one, we have
\begin{equation}\label{qc}
  \left\| L^m f - \int f d\mu \right\|_{L_\gamma^2} \leq C \rho^m
  \left\|f\right\|_{L_\gamma^2}
\end{equation}
for a uniform constant $C$. 

We prove \qr{qc} by representing $L$ as a \emph{dilation} of a 
transfer operator $\L$ defined on a larger graded Hilbert-space 
$\VV=\overline{\lim\VV_n}$ 
consisting of matrix-valued processes.
The graded Hilbert space $L^2 = \overline{\lim L^2_n}$ 
is isometrically embedded into $\VV$.
It is fairly straightforward to show that 
quasi-compactness holds for $\L$ on $\VV_\gamma$ and, 
since $L = \Q \circ \L$ 
where $\Q:\VV\to L^2$  is the orthogonal projection.
This result carries over to $L$ on $L^2_\gamma$ for those $\gamma$ such that $\Q$
is continuous as an operator from $\VV_\gamma$ to $L^2_\gamma$. 

From the quasicompactness result \qr{qc}, exponential 
decay of correlations (mixing at an exponential rate) 
follows automatically: If $f\in L^2(\X,\nu)$ and 
$g \in L_\gamma^2$ then for some $0 < \rho < 1$ and
some uniform constant $C$, we have
$$\left|\int f \, (g\circ T^n) \; d\nu- \int f\; d\nu \int g\; d\nu\right|\leq C\rho^n.$$

We note that our quasicompactness results depend on the general symbolic formulation,
where we use the ultrametric on the symbolic space $\X$ and not some underlying geometric distance. 
Hence, the quasicompactness on our ``Besov space'' will not immediately translate into quasicompactness on a Besov space defined on the metric space of an underlying fractal, such as those of Jonsson \cite{jonsson} and Grigor'yan \cite{grigor}.

\subsection{More results and the structure of the paper}
In section 2, we present the Kusuoka measure from an abstract point of view, namely on cylinder sets, which corresponds to the products of matrices that act on a finite-dimensional vector space that corresponds to a space of harmonic functions. We give two special examples. The first shows that the Kusuoka measure in one dimension reduces to the class of Bernoulli measures. We can thus view abstract the Kusuoka measure as a natural generalisation of the Bernoulli measure, the difference being that we ``multiply matrices instead of numbers''. The second example is a brief discussion of a well-studied case, that of the Sierpi\'nski gasket, extensively studied in \cite{str1}, \cite{str3}, \cite{str}, \cite{str2}, and in many other works.

In section 3, we state the main results: quasicompactness of the transfer operator on the space $L_\gamma^2$, as well as exponential decay of correlations. We also consider special results for the Sierpi\'nski gasket and the family $SG_n$, defined in subsection 3.2. In Theorem \ref{cor4}, we obtain precise mixing rates of convergence in a simplified case, when we shift cylinders of a fixed length. We have only stated this result for the Sierpi\'nski gasket, but we have made some calculations for the mixing rates for $SG_n$, $n=3,4,5$; see Example~\ref{SG}.

In section 4, we introduce a Hilbert space $\VV$ on which a transfer operator that acts on matrix-valued operators is easily analysed in terms of the matrix operator $\M$, defined in \eqref{Mdef}. Here we obtain a simple expression for the transfer operator as the dual of the shift map $\T$, so here the ``higher-dimension'' generalisation of Bernoulli measures is exploited. The proof of Theorem~\ref{contr1} that states the quasicompactness on a space $\VV_\gamma$, equipped with a certain ``smooth'' norm $\|\cdot \|_\gamma$, relies essentially the contraction of the matrix-operator $\M$ and the contraction ratio $\theta_1<1$ remains the same. In subsection 4.5, we obtain a strict contraction of $\M$ acting on symmetric matrices and this is used to obtain more precise rates of convergence (Theorem~\ref{cor4}) in the case of the Sierpi\'nski gasket.

In section 5, we prove that the quasicompactness result in section 4, for the matrix-valued space $\VV_\gamma$, may be retrieved for {\em functions} in $L^2_\gamma$, by means of a projection; see Lemma~\ref{thmD} and its proof subsection 5.1, the most technical and difficult part of the paper. In Lemma~\ref{thmD} a new contraction factor $\theta_2<1$ is introduced and the final contraction ratio $\rho$ expressed in terms of the quasicompactness of Theorem~\ref{thm1} must be strictly larger than both $\theta_1$ and $\theta_2$. It remains an open problem, even in the case of the Sierpi\'nski gasket, whether $\theta_2=\theta_1$. In subsection 5.2, we restrict our attention to the Sierpi\'nski gasket and prove Theorem~\ref{cor4}.

\subsection{Acknowledgements} 

The problem of studying Kusuoka measures using transfer operator techniques was proposed by Professor R.S.\ Strichartz of Cornell University. The second author is grateful for visits to Cornell in May 2012 (when the problem was proposed) and again in September 2014 and December 2015. This research was supported by the {\bf Royal Society (UK)}, grant {\bf IE121546}: {\em Ergodic theory of energy measures on fractals}. The grant provided several opportunities for all authors to visit University of Warwick, Uppsala University and University of G\"avle. We are grateful to the anonymous referee for many clarifying comments.

\section{The Kusuoka measure} \label{kusdef}
\subsection{Cylinders and cylinder sets}
\label{sec:notation}
 An elementary \emph{cylinder} is a function $\al: [a,b)\to S$ defined
on some integer interval $[a,b)=\{a,a+1,\dots,b-1\}$. The \emph{length} of
the cylinder is $\ell(\al)=b-a$. The corresponding \emph{cylinder set}
$\al\subset \X$ is the set of $x\in\X$ that coincides with $\al$ on
$[a,b)$. (Notice that we make no notational distinction between a
cylinder and the equivalent cylinder set.)

A cylinder is an \emph{initial} cylinder if the domain is
$[a,b)=[0,n)$ for some $n$ and we write $S^n$ instead $S^{[0,n)}$, and
also $S^*$ for the set $\cup_n S^n$ of initial cylinders. The set
$S^0$ consists of the empty cylinder $\emptyset$.  Let $[x]_n$ denote
the initial cylinder obtained by restricting $x$ to the interval
$[0,n)$. Let $\CF_n$ be the algebra generated by the cylinder sets
$[x]_n$, $x\in\X$ and let $\CF$ be the limit $\sigma$-algebra as $n\to \infty$.

For a cylinder $\al\in S^{[a,b)}$ and a symbol $s\in S$, an expression
of the form $\al s$ it is understood as the concatenation of the
cylinder with the symbol to the right, so that $\al s$ is a cylinder
in $S^{[a,b+1)}$ with $(\al s)(b)=s$. If $a>0$ then $s\alpha$ is the
corresponding concatenation to the left, but, if $\al\in S^n$ is an
initial cylinder then $s\al\in S^{n+1}$ with $(s\al)(0)=s$ and
$(s\al)(k)=\al(k-1)$, $k=1,\dots,n+1$. The expression $sx$ refers in
the same way to the concatenated and shifted sequence $sx\in\X$, where
$(sx)(0)=s$ and $sx(n) = x(n-1)$, $n\geq1$.

\subsection{Construction of an abstract Kusuoka measure}
In order to define the Kusuoka measure, we consider a fixed finite
dimensional Hilbert space $H$ having scalar product
$\scp\cdot\cdot$. Let $\B=\B(H)$ denote the space of bounded operators
on $H$.  For any cylinder $\al\in S^{[a,b)}$, we associate the
compound ``restriction map''
$$\Al=\a_{\al({a})}\, \cdots \a_{\al(b-1)},$$
where $\a_s\in \B$, $s\in S$ are operators with certain properties specified later. 
We define the \emph{Kusuoka measure} $\nu$ on the cylinder set
$\al=\{ x : x\vert_{[a,b)}=\al\} \subset \X$ as the trace
\begin{equation}
  \nu(\al) = \Tr\left(\,\Al^*\,\E \, \Al\,\right),\label{nudef}
\end{equation}
where $\E$ is a positive definite symmetric operator $H\to H$ such
that $\Tr(\E)=1$.

The definition \qr{nudef} defines a consistent probability measure on
the measurable space $(\X,\CF)$ if and only if the system
$\{\a_s: s\in S\}$ of maps satisfies the following two conditions
\begin{equation}\label{fundeq1}
  \sum_s \a_s^*\E A_s = \E
\end{equation}
and
\begin{equation}\label{fundeq2}
  \sum_s \a_s\a_s^* = I.
\end{equation}
Consistency of definition of $\nu$ follows: E.g.\ \qr{fundeq2} gives
that
$$\sum_s \nu(s\alpha) = 
\sum_s \Tr(\a_s^* \Al^*\E \Al \a_s) = \Tr(\Al^*\E\Al I)=\nu(\al), $$
so $\nu$ is consistent with extensions to the left. Similarly,
\qr{fundeq1} shows that $\sum_s\nu(\al s) = \nu(\al)$.  It is also
clear that $\nu$ will be a shift invariant measure on $(\X,\CF)$, since
$\nu(\al)$ is determined by the \emph{word} corresponding to the
cylinder $\al\in S^{[a,b)}$.

As is shown in \cite{kusuoka2}, the Kusuoka measure is moreover
\emph{ergodic} if one assumes that the system is \emph{irreducible} in
the sense that
\begin{equation}\label{irredorig}
  \text{the linear maps $\a_s$, $s\in S$, have no common nontrivial invariant subspace $W$.}
\end{equation}
That is, there exists no subspace $W$, $(0)\subsetneq W \subsetneq H$,
such that $\a_s(W)\subset W$ for all $s\in S$.

We will consider the space $\B=\B(H)$ of operators on $H$. Note that,
if we define the operators $\M: \B\to\B$ and $\M^*:\B\to\B$ by 
\begin{equation}\label{Mdef}
  \M(B) = \sum_s \a_s B \a_s^*,\quad\text{and}\quad
  \M^*(B) = \sum_s \a_s^* B \a_s
\end{equation}
then \qr{fundeq1} and \qr{fundeq2} can be expressed as a statement of
fixed points, i.e.\ that $\M(I)=I$ and $\M^*(\E)=\E$. The operator
$\M^*(B)$ is the adjoint of $\M$ on $\B$ with respect to the
Hilbert-Schmidt scalar product $\scp AB_{HS}=\Tr(B^*A)$.

We will often use the the scalar product $\scpe \cdot\cdot$ with
associated norm $\norme A = \scpe AA^{1/2}$ given by
\begin{equation}\label{scpedef}
  \scpe AB = \Tr(\E AB^*) =\Tr(B^* \E A),\quad A,B\in\B. 
\end{equation}
Notice that $\nu(\al)=\norme{\Al}^2$ and that \qr{fundeq1} and
\qr{fundeq2} are equivalent to the statement that the scalar product
$\scpe\cdot\cdot$ is ``bi-invariant'' in the sense that
\begin{equation}\label{scpfundeq1}
  \scpe XY 
  = \sum_{\al\in S^k} \scpe{\Al X}{\Al Y} 
  = \sum_{\al\in S^k} \scpe{X\Al }{Y\Al} 
  , \quad\forall\,
  X,Y\in \B,
\end{equation}
for all $k\geq 0$.

For our main results, we use an irreducibility condition, implying \eqref{irredorig},
stating that for some $k>1$
\begin{equation}\label{irred}
  c_k=\inf_{F} \sum_{\al\in S^k} \scpe{\Al F}{\Al}^2 > 0
\end{equation}
where the infimum (minimum) is over the compact set of all symmetric
operators $F\in\B$ such that $\norme F=1$ and $\scpe FI=0$. Notice
that $c_k < 1$, since, by Cauchy--Schwarz, we have
$$
\sum_{\al\in S^k} \scpe{\Al F}{\Al}^2 \leq \sum_{\al} \norme {\Al F}^2
\cdot \norme{\Al}^2 = \sum_{\al} \norme {\Al F}^2 \cdot \nu(\alpha),
$$
where we conclude from the irreducibility condition \eqref{irredorig} that $\nu(\alpha)<1$. Moreover, \qr{scpfundeq1} implies that
$\sum_{\al\in S^k} \norme{\Al F}^2$ equals $\norme F^2 = 1$.

It is not clear to us in what circumstances the condition \qr{irred}
is a consequence of the irreducibility condition \qr{irredorig}.
Note that the stronger irreducibility condition \eqref{irred} follows if 
the maps $B\mapsto A_s B A_s^*$ have no non-trivial common invariant subspace of $\mscr B$.

\subsection{Examples}

The Kusuoka measure can usefully be viewed as a general construction
for a large class of shift invariant measures.
\begin{example}[Bernoulli measure]\quad The product form of Kusuoka
  measure shows that it is a natural generalisation of the Bernoulli
  measure.  Indeed, in the special case when $H=\RR$ and $A_s$ is
  $v\mapsto q_s v$, where \qr{fundeq2} states that
  $q_1^2 + \cdots + q_k^2=1$. In this case, $\nu$ is the Bernoulli
  measure associated to the distribution $p(s)=q_s^2$ on $S$. The energy operator is here the identity operator, which clearly has trace 1.
  Notice that the irreducibility condition \eqref{irred} is trivially satisfied in this case.
\end{example}

\begin{example}[Classical Kusuoka measure on $SG$]
The terminology we use comes from applications in the context of
  harmonic analysis on certain fractals: The space $H$ is the finite
  dimensional space of harmonic functions modulo constants on a
  self-similar fractal $K$ with a prescribed finite ``boundary''.
  
  The \emph{restriction map} $\A_s$ for a symbol $s\in S$ represents
  the restriction of harmonic functions to one of the $|S|$
  sub-fractals $K_s$, $s\in S$. The quadratic form
  $\E$ on $H$ is an \emph{energy form} which the harmonic functions in
  $H$ are minimising. By self-similarity we have an isomorphism $K_s\cong K$ and, by using this isomorphism and a suitable scaling, we
  can represent the restriction of harmonic functions to $K_s$ as a map $\A_s: H \to H$.  The invariance relation \qr{fundeq1} follows since the
  energy on the whole fractal is the sum of the energies on the
  sub-fractals. There is also a unique dual invariant form $\mscr R$ on $H^*$, but we identify $\mscr R$ with a given inner product on the Hilbert space $H$. Hence we obtain the relation \eqref{fundeq2}. 

A well-studied example is the Sierpi\'nski gasket, $SG$, which is the unique nonempty compact set satisfying
$$SG=\bigcup_{i=0}^2 F_i SG,$$
where $F_i =\frac{1}{2}(x+q_i)$, and where $\{q_i\}_{i=0}^2$ are the
vertices of an equilateral triangle. These three points are also the
boundary points of $SG$.  We obtain the Kusuoka measure on $SG$ (see,
e.g., \cite{str1}, \cite{str3}, \cite{str}, \cite{str2}) in the
special case $S =\{0, 1, 2\}$ and corresponding matrices
$\a_s = R^{-s} DR^s$, where $R$ is the rotation by $2\pi/3$, and where
$$
D= \left(
  \begin{matrix}
    \frac{3}{\sqrt{15}} & 0 \\
    0 & \frac{1}{\sqrt{15}}
  \end{matrix}
\right).
$$
For a non-zero harmonic function $h$, the
{\em energy measure} $\nu_h$ is defined on an elementary cylinder
$[w]=\{x:[x]_k=w\}$ by
$$ \nu_h([w]) = \E ( A_w h, A_w h),$$
where we have lifted the restriction of a harmonic function on
$F_w SG=F_{w_1}\cdots F_{w_n}SG$ to an element $A_w h$, which is also
a harmonic function on $SG$. We have
$$ \E(h,h) = \sum_{w\in S^k} \E(A_w h, A_w h), $$ 
where one should observe that the usual normalisation constants are built into the restriction maps $A_s$, and also that
$$ A_w = A_{w_k} A_{w_{k-1}} \dots A_{w_1}, $$
if $w=w_1w_2w_3\dots w_k$. We can for instance choose the basis of two
harmonic functions (see \cite{str2})
$h_1=\frac{\sqrt{2}}{3}(1,-\frac{1}{2},-\frac{1}{2})$ and
$h_2=\frac{1}{\sqrt{6}}(0,1,-1)$. We can obtain the Kusuoka measure on
$SG$ as the sum $\nu=\nu_{h_1}+\nu_{h_2}$ of energy measures for the
orthonormal basis of $H$.
In this case the restriction maps are symmetric matrices, whence
$\M=\M^*$ and $\E=(1/2)I$. We obtain by direct computation that the
action of $\M$ on the subspace of symmetric matrices in $\B$ is given
by
\begin{equation}\label{SGM}
\M\left(\begin{bmatrix} a &  b \\ b & -a \end{bmatrix} +
c I \right) = 
\frac45 \cdot \begin{bmatrix} a &  b \\ b & -a \end{bmatrix} 
+ cI. 
\end{equation}
Similarly, it contracts with a factor $4/5$ on the space of
anti-symmetric matrices. It follows that $\M$ acts as a contraction on
the space of trace-less matrices with the contraction constant
$\theta_1=4/5$, which is one of the constants that will be important to us in the sequel in order to describe mixing rates. From this it follows (Corollary 5 below) that
if $A\in \CF_k$ (measurable with respect to cylinders sets of length $k$) and $B\in \CF$ (any Borel set), we have
$$\left|\nu(\T^{-(n+k)}A \cap B)-\nu(A)\nu(B)\right|\leq 2\left(\frac{4}{5}\right)^n.$$
\end{example}

\section{Results}\label{sec:results}

\subsection{A spectral gap for the transfer operator on the
  associated Banach space}
A standard approach to studying ergodic properties of $\T$-invariant
measures on $\X$ is to use transfer operators defined on spaces of
functions.  In particular, if we consider the real Hilbert space
$L^2(\X, \nu)$ with the scalar product $\scp fg = \int f g \d\nu$ and
norm $\| f\|={\scp ff}^{1/2}$ then we can define the \emph{transfer
  operator} $L : L^2(\X, \nu) \to L^2(\X, \nu) $ as the dual of the
shift map, i.e.,
\begin{equation}
  \scp{L f}g =\scp{ f }{ g \circ \T}\label{dualityL}
\end{equation}
for $f,g \in L^2(\X, \nu)$.  It is easy to see that that the operator
norm of $L$ is one and that it has a maximum modulus eigenvalue with
the constant function $1$ as the normalised eigenvector.

The operator $L$ takes the explicit form
$$ Lf(x) = \sum_s g(sx) f(sx) $$
where the \emph{$g$-function}, $g:X\to [0,1]$, can be defined as
$$g(x)  = \lim_{n\to\infty} \frac{\nu([x]_n)}{ \nu\left([\T x]_{n-1}\right)}. $$
The $g$-function exists, on account of the martingale convergence
theorem, $\nu$-almost everywhere.  Bell, Ho and Strichartz \cite{str3}
showed that the $g$-function associated to the Kusuoka measure for the
Sierpi\'nski gasket has a dense countable family of discontinuities. In
particular, the Kusuoka measure is not a Gibbs measure and therefore
not amenable to the classical thermodynamic ideas.

We say that an operator $L$ on a Banach space has a \emph{spectral
  gap} if it has a unique eigenvalue $\lambda$ of maximum modulus and
if all other elements of the spectrum of $L$ has modulus less than
some $\rho<|\lambda|$.  In order to prove that there is a spectral gap
for the operator $L$, one usually needs to restrict it to a smoother
class of functions which is considerably smaller than $L^2$.  For the
Kusuoka measure, because of the discontinuities in the $g$-function,
it is not appropriate to consider, say, H\"older continuous
functions. Instead we consider functions where the martingale sequence
converges in $L^2$-norm quickly enough.

Any element $f\in L^2(\X,\nu)$ can be uniquely represented by the
corresponding \emph{martingale process}
$$ f(\al)=\Ex{f\mid \al} = \nu(\al)^{-1} \int_\al f\d\nu,\quad\al\in S^*.$$
We will usually refer to the martingale process $f(\al)$ by the same
name as the element $f(x)$ in $L^2(\X,\nu)$.  The function
$x\mapsto f_n(x):=f([x]_n)$ is the orthogonal projection onto the
finite dimensional subspace $\mathrm m \CF_n$ of $L^2(\X,\nu)$ of
$\CF_n$-measurable functions and by the martingale convergence
theorem, we have $\lim_n f_n(x) = f(x)$, $\nu$-almost everywhere.  The
\emph{martingale difference sequence} $(f\ii n)_{n=0}^\infty$ of $f$
is given by $f\ii 0 = f_0$ and $f\ii n(x) = f_n(x) - f_{n-1}(x)$ for
$n\geq 1$.

Given a real number $\gamma$, $0 < \gamma < 1$, we define for
$f\in L^2(\X,\nu)$ a norm $\norm f_\gamma$ by
\begin{equation}\label{gammafunspace}
  \norm f_\gamma = 
  \sum_{n=0}^\infty \gamma^{-n} \norm {f\ii n}.
\end{equation}
The space of functions $f: X \to \RR$ such that the $\gamma$-norm
$\norm f_\gamma$ is finite is denoted $L^2_\gamma$.  We observe that
$L^2_\gamma$ is a Banach space which is dense in $L^2(\X,\nu)$. One
can perhaps think of it as a type of Besov space.

Note also that if $f: X \to \RR$ is $\al$-H\"older continuous in the
sense that
$$ \var_n f = \sup_{\substack{[x]_n=[y]_n}} |f(x)-f(y)| = \Ordo{2^{-\al n}} $$
then it belongs to $L^2_\gamma$ for $\gamma > 2^{-\al}$.

Our main result involves proving a spectral gap for the transfer
operator $L$ if we restrict $L$ to the spaces $L^2_\gamma$.  Let
$c_k$, $0\leq c_k < 1$ be as in the irreducibility condition
\qr{irred}.

\begin{theorem}\label{thm1}
  Assume that the irreducibility condition \qr{irred} holds and that
  $\gamma$ is as above.  Define
  $$ \theta_2 := \inf_k (1-c_k)^{1/k} < 1. $$
  Then, for $\gamma > \theta_2$, the transfer operator $L$ restricts
  to a continuous operator $L: L^2_\gamma \to L^2_\gamma$ having a
  spectral gap.
\end{theorem}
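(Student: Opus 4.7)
The plan is to deduce the theorem from a spectral-gap result for the larger, matrix-valued transfer operator $\L$ acting on the graded Hilbert space $\VV$ of matrix-valued processes introduced in Section~4. The key structural inputs are an isometric embedding $E : L^2(\X,\nu) \hookrightarrow \VV$, its orthogonal projection $\Q = E^* : \VV \to L^2$, and the intertwining relation $\L E = E L$; together with $\Q E = I$, this yields $L^m = \Q\,\L^m\,E$ for every $m \geq 0$, so a quasicompactness estimate for $\L$ transfers to one for $L$ as soon as both $E : L^2_\gamma \to \VV_\gamma$ and $\Q : \VV_\gamma \to L^2_\gamma$ are bounded. Writing $L^m - \int(\cdot)\,d\nu = \Q(\L^m - P)E$ for the rank-one spectral projection $P$ of $\L$ onto its fixed direction, the $\gamma$-norm estimate \qr{qc} will follow by composing these bounds.

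The first step is to prove the spectral gap for $\L$ on $\VV_\gamma$ (Theorem~\ref{contr1}). In the matrix-valued picture $\L$ reduces essentially to multiplication by the operator $\M$ of \qr{Mdef}, which fixes $I$. The irreducibility condition \qr{irred} gives $\sum_{\al \in S^k}\scpe{\Al F}{\Al}^2 \geq c_k > 0$ for every normalised traceless symmetric $F$, and a Cauchy--Schwarz argument combined with the bi-invariance \qr{scpfundeq1} converts this into a strict contraction of $\M^k$, with some ratio $\theta_1 < 1$, on the $\scpe{\cdot}{\cdot}$-orthogonal complement of $\RR\cdot I$. Summing the resulting geometric series against the $\gamma^{-n}$ weights yields $\|\L^m v - Pv\|_{\VV_\gamma} \leq C \rho_1^m \|v\|_{\VV_\gamma}$ for some $\rho_1 < 1$.

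The second step is to verify the continuity of the two maps $E$ and $\Q$ in the $\gamma$-norm. Continuity of $E : L^2_\gamma \to \VV_\gamma$ is essentially automatic: $E$ respects the martingale filtration $\CF_n$ and is an $L^2$-isometry on each level, so the $n$-th matrix-valued martingale difference of $E f$ has the same $L^2$-norm as $f^{(n)}$. The genuinely difficult ingredient, and Lemma~\ref{thmD}, is continuity of $\Q : \VV_\gamma \to L^2_\gamma$; this is what forces $\gamma > \theta_2$.

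The main obstacle is Lemma~\ref{thmD}. The difficulty is that $\Q$ extracts a scalar (trace-type) part of a matrix-valued process, and traceless components of $v \in \VV$ at scales $m \geq n$ can, through iterated multiplication by the $\a_s$'s, still contribute to the $n$-th martingale difference of $\Q v$. The irreducibility quantities $c_k$ control this leakage: after $k$ steps at most a fraction $1 - c_k$ of the traceless mass survives, giving decay at rate $(1-c_k)^{1/k}$, and optimising in $k$ gives the rate $\theta_2$. One would then express $\|(\Q v)^{(n)}\|_{L^2}$ as a geometric convolution of the $\|v^{(m)}\|_{L^2}$ against weights of order $\theta_2^{|n-m|}$; Young's inequality then shows this is summable against $\gamma^{-n}$ precisely when $\gamma > \theta_2$, yielding the boundedness of $\Q$ with an operator norm that, combined with the $\L$-estimate from step one, produces the desired spectral gap for $L$ on $L^2_\gamma$ at some rate $\rho < 1$ depending on both $\theta_1$ and $\theta_2$.
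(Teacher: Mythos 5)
Your plan follows essentially the same route as the paper: realise $L^2(\X,\nu)$ isometrically as a graded subspace of $\VV$, obtain $L^m=\Q\circ\L^m$ on that subspace, prove the spectral gap for $\L$ on $\VV_\gamma$ (Theorem~\ref{contr1}), note that the embedding is grading-preserving and hence automatically $\gamma$-bounded, and isolate the continuity of $\Q:\VV_\gamma\to L^2_\gamma$ as the hard step governed by the irreducibility constants (Lemma~\ref{thmD}); your one-sided geometric bound $\|(\Q v)\ii j\|\lesssim\theta_2^{\,j-k}\|v\ii k\|$ and the summation against $\gamma^{-n}$ for $\gamma>\theta_2$ is exactly the paper's Lemma~\ref{thmD} and its corollary.

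The one point where your sketch does not hold up as stated is the mechanism you propose for step one: the strict contraction of $\M$ on the traceless part does \emph{not} follow from \qr{irred} by Cauchy--Schwarz and bi-invariance. The quantity $\sum_{\al}\scpe{\Al F}{\Al}^2$ in \qr{irred} measures the components of $\Al F$ along the lines $\RR\cdot\Al$, which is precisely what controls the projection $\Q$ (i.e.\ $\theta_2$), not the norm of $\M^k(F)=\sum_\al \Al F\Al^*$. The paper instead proves the contraction (Lemma~\ref{c}) by a Perron--Frobenius-type argument: positivity of the completely positive map $\M$ shows each eigenvalue of $\M(B)$ is a convex combination of the eigenvalues of $B$, and a unimodular eigenvalue would force a common invariant subspace, contradicting the weaker condition \qr{irredorig}. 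Since \qr{irred} implies \qr{irredorig}, your overall architecture still delivers the theorem, but you have conflated the roles of the two hypotheses, and the contraction of $\M$ needs this separate positivity argument rather than the one you indicate.
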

\begin{remark}
  The bound $\theta_2=(1-c_k)^{1/k}$ is not meant to be optimal; it is
  based on an argument where we use a pointwise estimate of a
  convergence rate $\Q_n G$ of projections.
\end{remark}

As a simple consequence of our results we have the following result,
which expresses how quickly the Kusuoka measure can be approximated.
\begin{corollary}\label{cor1}
  There exists $0 < \rho < 1$ such that for any $f\in L_\gamma^2$
  $$
  \left\| L^m f - \int f d\mu \right\|_{L_\gamma^2} \leq C \rho^m
  \left\|f\right\|_{L_\gamma^2}
  $$
  where $C$ is a uniform constant.
\end{corollary}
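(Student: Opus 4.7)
The statement is a standard consequence of the spectral gap proved in Theorem~\ref{thm1}, so the plan is to unpack that gap into an explicit decomposition and read off the exponential decay. The only new ingredients needed are the identification of the dominant eigenspace with the constants and the exclusion of a nontrivial Jordan block at~$1$.

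First, I would identify the dominant eigenpair. Clearly $\mathbf{1}\in L^2_\gamma$ with $\|\mathbf{1}\|_\gamma=1$, $L\mathbf{1}=\mathbf{1}$, and pairing \qr{dualityL} with $g=\mathbf{1}$ gives the invariance $\int Lf\,d\nu=\int f\,d\nu$. Define $Pf:=\bigl(\int f\,d\nu\bigr)\mathbf{1}$; the continuous embedding $L^2_\gamma\hookrightarrow L^2(\X,\nu)$ (namely $\|f\|_{L^2}\leq\|f\|_\gamma$, obtained by keeping only the $n=0$ term of \qr{gammafunspace}) makes $P$ a bounded rank-one projection on $L^2_\gamma$ satisfying $LP=PL=P$. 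By the ergodicity of $\nu$ under the irreducibility hypothesis \qr{irred} (which implies \qr{irredorig}), the $L$-eigenspace at $1$ in $L^2(\X,\nu)$ is exactly $\RR\cdot\mathbf{1}$, and the same then holds in $L^2_\gamma$.

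Next, I would rule out a nontrivial Jordan block at $1$: if $Lg=g$ and $Lf=f+g$ in $L^2_\gamma$, then $L^nf=f+ng$ would grow linearly in $L^2(\nu)$, contradicting $\|L^nf\|_{L^2(\nu)}\leq\|f\|_{L^2(\nu)}$ unless $g=0$. Combined with Theorem~\ref{thm1}, this yields the Riesz decomposition $L=P+N$ on $L^2_\gamma$ with $PN=NP=0$ and spectral radius of $N$ equal to some $\rho_0<1$; hence $L^m=P+N^m$, so
$$L^m f - \int f\,d\nu = N^m f.$$
For any $\rho\in(\rho_0,1)$, Gelfand's formula supplies a constant $C$ with $\|N^m\|_{L^2_\gamma\to L^2_\gamma}\leq C\rho^m$, which is exactly the required estimate. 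There is no real obstacle beyond Theorem~\ref{thm1}; the only mildly delicate point is the exclusion of the Jordan block, which is immediate from $L^2$-contractivity.
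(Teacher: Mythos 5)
Your overall strategy---split off the constants and apply Gelfand's formula on the complement---is the right shape, and several of your preliminary steps are sound: $P$ is bounded on $L^2_\gamma$ because $|\int f\,d\nu|=\|f\ii0\|\le\|f\|_\gamma$, $LP=PL=P$, the fixed space of $L$ in $L^2(\X,\nu)$ is $\RR\cdot\mathbf 1$ by ergodicity, and there is no Jordan chain of length two by $L^2$-contractivity. The genuine gap is the sentence ``combined with Theorem~\ref{thm1}, this yields the Riesz decomposition $L=P+N$ with spectral radius of $N$ less than $1$.'' In a Banach space, knowing that $1$ is the unique peripheral spectral point, that $\ker(L-I)=\RR\cdot\mathbf 1$, and that $\ker(L-I)^2=\ker(L-I)$ does \emph{not} identify the Riesz projection at $1$ with your rank-one $P$: the range of the Riesz projection may strictly contain the generalized eigenspace, namely an infinite-dimensional invariant subspace on which $L-I$ is injective and quasinilpotent, so that $1$ is an essential singularity of the resolvent rather than a simple pole. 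A model for the obstruction is $T=(I-V)\oplus 1$ with $V$ the Volterra operator: $T$ is power-bounded, $\sigma(T)=\{1\}$ (so $T$ ``has a spectral gap'' in the paper's sense, vacuously), its fixed space is one-dimensional, it has no order-two generalized eigenvectors, and yet $\|T^m-P\|\ge\|(I-V)^m\|=1$ for all $m$. Power-boundedness of $L$ on $L^2_\gamma$ does not by itself exclude this scenario, so the exponential decay cannot be extracted from the bare statement of Theorem~\ref{thm1} plus your two supplementary facts.

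The paper closes exactly this gap quantitatively rather than through abstract spectral theory. The proof of Theorem~\ref{contr1} establishes $\|\L^m Y\|_\gamma\le C\rho^m\|Y\|_\gamma$ for every $Y\in\VV_\gamma$ with $\Scp{Y}{\A}=0$, where $\rho=\max\{\theta_1,\gamma\}$, and Lemma~\ref{thmD} shows $\Q$ is bounded on $\VV_\gamma$ with norm at most $(1-\theta_2/\gamma)^{-1}$ when $\gamma>\theta_2$. Since $L^m=\Q\circ\L^m$ and $Y=f-\int f\,d\nu$ satisfies $\Scp{Y}{\A}=0$ and $\|Y\|_\gamma\le\|f\|_\gamma$, the corollary follows immediately with an explicit rate $\rho>\max\{\theta_1,\gamma\}$, $\gamma>\theta_2$. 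To repair your argument, replace the Riesz/Jordan step by the observation that these estimates show directly that the spectral radius of $L$ restricted to the closed invariant hyperplane $\ker P=\{f:\int f\,d\nu=0\}$ is at most $\rho<1$; your concluding application of Gelfand's formula on that subspace is then correct.
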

The rate of convergence, $\rho$, depends on both $\theta_2$ and a
contraction constant $\theta_1$ in \eqref{norminf}, or, equivalently,
\eqref{gol1}. From Theorem~\ref{contr1} it follows that we may choose
$\rho>\max\{\theta_1,\gamma\}$, where $\gamma>\theta_2$, as above.

As a consequence, we have exponential decay of correlations:
\begin{corollary}\label{cor3}
  If $f\in L^2(\X,\nu)$ and $g \in L_\gamma^2$ (e.g., an
  $\alpha$-H\"older continuous function, if $\gamma = 2^{-\alpha}$),
  then for some $0 < \rho < 1$ and some uniform constant $C$, we have
$$\left|\int f \, \left(g\circ T^n\right) \; d\nu- \int f\; d\nu \int g\; d\nu\right|\leq C\rho^n.$$
\end{corollary}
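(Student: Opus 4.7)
The plan is to reduce the correlation estimate to the spectral gap in Corollary~\ref{cor1} by means of the transfer-operator duality. Iterating the defining identity $\scp{Lh}{k}_{L^2}=\scp{h}{k\circ T}_{L^2}$ yields
$$
\int f\,(g\circ T^n)\,\d\nu = \scp{L^n f}{g}_{L^2},
$$
and the normalization $L\ett=\ett$ forces $\int L^n f\,\d\nu=\int f\,\d\nu$. Subtracting the product of means therefore reduces the problem to the $L^2$-inner product
$$
\int f\,(g\circ T^n)\,\d\nu - \int f\,\d\nu\int g\,\d\nu
= \scp{L^n f - \textstyle\int f\,\d\nu}{g}_{L^2}.
$$

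To close the estimate I would combine Cauchy--Schwarz in $L^2$ with the continuous embedding $L^2_\gamma\hookrightarrow L^2$, which follows from $\|h\|_{L^2}^2=\sum_n\|h\ii n\|_{L^2}^2\le\bigl(\sum_n\gamma^{-n}\|h\ii n\|_{L^2}\bigr)^2=\|h\|_{L^2_\gamma}^2$ (valid since $\gamma<1$). Corollary~\ref{cor1} then supplies
$$
\|L^n f - \textstyle\int f\,\d\nu\|_{L^2}\le\|L^n f - \textstyle\int f\,\d\nu\|_{L^2_\gamma}\le C\rho^n\|f\|_{L^2_\gamma},
$$
and a final Cauchy--Schwarz finishes the job, absorbing the product of norms $\|f\|_{L^2_\gamma}\|g\|_{L^2}$ into the uniform constant $C$. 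The H\"older case follows because any $\alpha$-H\"older function belongs to $L^2_\gamma$ whenever $\gamma>2^{-\alpha}$.

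The main obstacle is a directional one: the clean argument above requires the \emph{smooth} factor to sit under $L^n$, whereas the statement is written with the smooth function $g\in L^2_\gamma$ composed with $T^n$. To obtain the statement literally I would instead expand $g=\int g+\sum_{k\ge 1}g\ii k$ in its martingale differences and use the orthogonality $g\ii k\perp L^2_{k-1}$ to rewrite
$$
\scp{L^n f-\textstyle\int f\,\d\nu}{g}_{L^2}=\sum_{k\ge 1}\scp{(L^n f)\ii k}{g\ii k}_{L^2}.
$$
Cauchy--Schwarz on each term combined with $\|g\ii k\|_{L^2}\le\gamma^k\|g\|_{L^2_\gamma}$ then reduces matters to bounding the dual norm $\|L^n f-\int f\,\d\nu\|_{(L^2_\gamma)^*}=\sup_{k\ge 1}\gamma^k\|(L^n f)\ii k\|_{L^2}$, which is controlled by the spectral gap transferred to the Banach adjoint of $L|_{L^2_\gamma}$, using the continuous inclusion $L^2\hookrightarrow(L^2_\gamma)^*$ via $h\mapsto\scp{h}{\,\cdot\,}_{L^2}$ of operator norm $\le 1$. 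Either formulation ultimately reduces the result to the spectral gap of Corollary~\ref{cor1}.
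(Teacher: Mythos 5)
Your first display and the chain that follows it --- $\int f\,(g\circ T^n)\,d\nu=\scp{L^nf}{g}$, subtract the product of means, then Cauchy--Schwarz together with the embedding $L^2_\gamma\hookrightarrow L^2$ and Corollary~\ref{cor1} --- is precisely the argument the paper intends (it offers nothing beyond the phrase ``as a consequence''), and you are right to flag the directional mismatch: that argument proves the estimate when the function fed to $L^n$ lies in $L^2_\gamma$ and the factor composed with $T^n$ is merely $L^2$, which is the transpose of what Corollary~\ref{cor3} literally asserts.

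Your attempted repair of the literal statement, however, does not close, and cannot. The reduction to bounding $\sup_{k\ge1}\gamma^k\|(L^nf)\ii k\|_{L^2}$ is fine, but that quantity is not governed by the Banach adjoint of $L|_{L^2_\gamma}$: under the $L^2$ pairing the adjoint of $L$ is the Koopman operator $h\mapsto h\circ T$, so the functional $g\mapsto\scp{L^nf}{g}=\scp{f}{g\circ T^n}$ is the image of $g\mapsto\scp fg$ under the $n$-th power of the adjoint of \emph{composition with $T$}, not of $L$; and composition with $T$ raises the degree of a cylinder function by one, hence scales the $\gamma$-norm by $\gamma^{-1}$ rather than contracting, so no spectral gap is available on that side. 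Indeed the statement with $f\in L^2$ and the smooth factor under $T^n$ is false: take the fair Bernoulli measure on $\{0,1\}^{\ZZ_+}$ (Example~1, where \eqref{irred} holds), $g(x)=(-1)^{x_0}\in L^2_\gamma$, and $f(x)=\sum_m c_m(-1)^{x_m}$ with $(c_m)\in\ell^2$; then $\int f\,d\nu=\int g\,d\nu=0$ while $\int f\,(g\circ T^n)\,d\nu=c_n$, which for $c_m=1/(m+1)$ is not $O(\rho^n)$ for any $\rho<1$. So the hypotheses of Corollary~\ref{cor3} must be read with the roles of $f$ and $g$ interchanged (equivalently, with $T^n$ applied to the $L^2$ factor), and your first paragraph is then a complete proof coinciding with the paper's.
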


\subsection{Specialisation to Sierpi\'nski gaskets}
We now specialize to some explicit estimates of rates of convergence
for the cases that correspond to the Sierpi\'nski gasket $SG$ and the
family $SG_n$, $n=2, 3, \ldots$ ($SG=SG_2$), which are realized in
${\mathbb R}^2$ and constructed by $n(n+1)/2$ contraction mappings $F_j(x)=x/n  +b_{j,n}$
for suitable choices of $b_{j,n}$, so that $SG_n$ is the unique nonempty compact set that satisfies
$$SG_n=\bigcup_{j=1}^{\frac{1}{2}n(n+1)} F_j(SG_n).$$

By a direct computation of $\theta_1$, we obtain the
following result for $SG$. In Example 3 have included
the corresponding rates for $SG_n$, $n=3,4,5$. These explicit approximation results
depends on the fact that for $SG_n$ we have symmetric restriction maps
$A_s$.
\begin{theorem}\label{cor4}
  For any function $f$ which is $\CF_k$-measurable (i.e., measurable
  with respect to the finite algebra generated by cylinders of length
  $k$), we have
  \begin{equation}\label{SG}
    \left\|L^{n+k} f(x)-\int f\; d\nu\right\|_{\infty}\leq 2\left(\frac{4}{5}\right)^n \left\|f\right\|_1,
  \end{equation}
  where $\| \cdot \|_{\infty}$ denotes the essential supremum norm.
\end{theorem}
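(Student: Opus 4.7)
\emph{Proof plan.} The plan is to reduce $L^{n+k}$ acting on $\CF_k$-measurable functions to a single application of $\M^{*n}$ to a trace-less matrix, and then to invoke the explicit $4/5$-contraction \qr{SGM}.

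First I would establish a matrix representation on cylinders. For $\al\in S^k$ and any initial cylinder $\gamma\subset\X$, the duality \qr{dualityL} together with \qr{nudef} and the definition \qr{Mdef} of $\M^*$ gives, via the cyclic property of the trace,
$$\int_\gamma L^{n+k}\mathbf{1}_\al\,d\nu \;=\; \sum_{\omega\in S^n}\nu(\al\omega\gamma) \;=\; \Tr\!\bigl(\M^{*n}(D_\al)\,A_\gamma A_\gamma^*\bigr),\quad D_\al := A_\al^*\E A_\al.$$
Setting $B_\gamma := A_\gamma A_\gamma^*/\nu(\gamma)$ and extending linearly to $f=\sum_\al f(\al)\mathbf{1}_\al$, the cylinder average of $L^{n+k}f$ on $\gamma$ equals $\Tr(\M^{*n}(D_f)B_\gamma)$ with $D_f:=\sum_\al f(\al)D_\al$. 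Iterating \qr{fundeq1} gives $\sum_{\al\in S^k}D_\al=\E$, so $D_f^0 := D_f-(\int f\,d\nu)\E$ is trace-less; since $\M^*\E=\E$ and $\Tr(\E B_\gamma)=1$,
$$\frac{1}{\nu(\gamma)}\int_\gamma\!\Bigl(L^{n+k}f - \textstyle\int f\,d\nu\Bigr)\,d\nu \;=\; \Tr\!\bigl(\M^{*n}(D_f^0)\,B_\gamma\bigr).$$

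Next I would invoke the SG contraction. Because the restriction maps $A_s$ are symmetric, $\M^*=\M$, and \qr{SGM} together with the analogous $4/5$-contraction on anti-symmetric matrices shows that $\M$ scales every trace-less $2\times 2$ matrix by $4/5$. Hence $\|\M^{*n}(D_f^0)\|_{\mathrm{op}}\leq (4/5)^n\|D_f^0\|_{\mathrm{op}}$. Combining this with the bounds $|\Tr(CB_\gamma)|\leq\|C\|_{\mathrm{op}}\Tr(B_\gamma)$, $\Tr(B_\gamma)=\Tr(A_\gamma A_\gamma^*)/\nu(\gamma)=2$, and the elementary estimate $v^*D_\al v = \tfrac12\|A_\al v\|^2\leq \tfrac12\|A_\al\|_{HS}^2 = \nu(\al)$ (propagated through the splitting $f=f^+-f^-$ to give $\|D_f^0\|_{\mathrm{op}}\leq\|f\|_1$), I arrive at the cylinder-wise estimate
$$\Bigl|\tfrac{1}{\nu(\gamma)}\!\int_\gamma\!\bigl(L^{n+k}f - \textstyle\int f\,d\nu\bigr)\,d\nu\Bigr| \;\leq\; 2\,(4/5)^n\,\|f\|_1.$$
Since $L^{n+k}f \in L^\infty$, its essential supremum equals the supremum of its cylinder averages, and the theorem follows.

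The only genuine obstacle is the first step — deriving a clean matrix formula for $L^{n+k}$ in spite of the dense discontinuities of the $g$-function. My way around this is to test $L^{n+k}f$ only against cylinder indicators, so that $L^{n+k}$ enters through the combinatorial identity $\int_\gamma L^{n+k}\mathbf{1}_\al\,d\nu = \nu(\al\cap T^{-(n+k)}\gamma)$ given by \qr{dualityL} and \qr{nudef} alone; the $g$-function itself never appears explicitly.
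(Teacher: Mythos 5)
Your argument is correct and reaches the stated bound, but it is organized in an essentially \emph{dual} way to the paper's proof, and the comparison is instructive. The paper works pointwise: it forms the matrix-valued martingale $H_n(x)=A_n(x)^*\E A_n(x)/\Tr(A_n(x)^*\E A_n(x))$, passes to its a.e.\ limit $H(x)$ to obtain $\nu(\al\mid x)=\Tr\left(H(x)\A(\al)\A(\al)^*\right)$, writes $L^{m+k}f(x)-\int f\,d\nu=\sum_\al \Tr\left(H(x)\,\M^m\!\left(\A(\al)\A(\al)^*-\nu(\al)I\right)\right)f(\al)$, and estimates via the Schatten--H\"older pairing $\norm{H(x)}_\infty\cdot\norm{\M^m(B_\al)}_1$ together with the trace-norm contraction of Lemma~\ref{lem:strict}. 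You instead test against cylinder indicators, so that everything reduces to the finite combinatorial identity $\sum_{\omega\in S^n}\nu(\al\omega\gamma)=\Tr\left(\M^{*n}(A_\al^*\E A_\al)\,A_\gamma A_\gamma^*\right)$: the operator $\M^*$ plays the role the paper assigns to $\M$, the a.e.\ density matrix $H(x)$ is replaced by the cylinder-level state $B_\gamma$ with $\Tr(\E B_\gamma)=1$, and the H\"older pairing is taken in the opposite order (operator norm against $\Tr(B_\gamma)=d=2$). What your route buys is the complete avoidance of the pointwise limit $H(x)$ and the formula \qr{xxwx} --- the only limiting argument left is the elementary fact that the essential supremum of an $L^2$ function is the supremum of its cylinder averages, via martingale convergence --- and, as a bonus, your positivity argument actually gives $\norm{D_f^0}_{\mathrm{op}}\le\tfrac12\norm{f}_1$ (since $0\le D_{f^\pm}$ forces the eigenvalues of $D_{f^\pm}-(\int f^\pm d\nu)\E$ into $[-\tfrac12\int f^\pm d\nu,\ \tfrac12\int f^\pm d\nu]$), which would even improve the constant $2$ to $1$. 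What the paper's version buys is uniformity over the whole family $SG_n$: it invokes the general Schatten-$1$ contraction $\theta_{1,1}$ of Lemma~\ref{lem:strict} for arbitrary symmetric restriction maps and arbitrary $d$, whereas your use of \qr{SGM}, of $\E=\tfrac12 I$ and of $d=2$ is specific to $SG$ (though it generalizes with the same lemma). All the individual steps you flag do check out: $\sum_{\al\in S^k}A_\al^*\E A_\al=\E$ is iterated \qr{fundeq1}, $D_f^0$ is automatically symmetric so \qr{SGM} applies directly, and testing against $\CF_m$-cylinders for all $m$ legitimately recovers the $\norm{\cdot}_\infty$ bound.
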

\begin{corollary}\label{cor5}
  If $\nu$ is the Kusuoka measure on three symbols related to the
  $SG$, we have for $A\in \CF_k$ and $B\in \CF$
  that
$$\left|\nu(\T^{-(n+k)}A \cap B)-\nu(A)\nu(B)\right|\leq 2\left(\frac{4}{5}\right)^n.$$
\end{corollary}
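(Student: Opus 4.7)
\noindent\textbf{Proof strategy for Corollary~\ref{cor5}.} The plan is to read this as an immediate decay-of-correlations consequence of the pointwise bound in Theorem~\ref{cor4}, obtained by pairing $L^{n+k}\ett_A$ against $\ett_B$ through the transfer-operator duality $\int (L^{n+k}f)\,g\,d\nu = \int f\,(g\circ \T^{n+k})\,d\nu$. Since $A\in\CF_k$, the indicator $\ett_A$ is $\CF_k$-measurable with $\|\ett_A\|_1=\nu(A)$, so Theorem~\ref{cor4} applied to $f:=\ett_A$ yields
\[
\bigl\|L^{n+k}\ett_A-\nu(A)\bigr\|_\infty \leq 2\bigl(\tfrac{4}{5}\bigr)^n\,\nu(A).
\]

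Next I would pair against $g:=\ett_B$. The duality identity rewrites the correlation as
\[
\int L^{n+k}\ett_A\cdot \ett_B\,d\nu \;=\; \int \ett_A\,(\ett_B\circ \T^{n+k})\,d\nu \;=\; \nu\bigl(A\cap \T^{-(n+k)}B\bigr),
\]
while $\nu(A)\nu(B)=\int \nu(A)\,\ett_B\,d\nu$. Subtracting these and applying the $L^\infty/L^1$ H\"older pairing, together with $\|\ett_B\|_1=\nu(B)\leq 1$, produces
\[
\bigl|\nu(A\cap \T^{-(n+k)}B)-\nu(A)\nu(B)\bigr| \;\leq\; 2\bigl(\tfrac{4}{5}\bigr)^n\,\nu(A)\nu(B) \;\leq\; 2\bigl(\tfrac{4}{5}\bigr)^n.
\]

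The only remaining subtlety is to match this with the expression $\nu(\T^{-(n+k)}A\cap B)$ displayed in the Corollary's statement. On the Sierpi\'nski gasket, the symmetry of the restriction maps $A_s$ together with $\E=\tfrac{1}{2}I$ forces the palindromic identity $\nu(\alpha)=\nu(\bar\alpha)$ for every cylinder $\alpha$: this is immediate from $A_s^{*}=A_s$ and cyclicity of the trace in $\Tr(A_\alpha^{*}A_\alpha)$. This palindromic symmetry is precisely the time-reversibility of the two-sided extension of $\nu$, which at the level of cylinders interchanges $\nu(A\cap \T^{-m}B)$ with $\nu(\T^{-m}A\cap B)$; approximating a general Borel $B$ by cylinders transfers the bound to the form stated. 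I do not foresee any real obstacle -- once Theorem~\ref{cor4} is in hand, the argument is essentially one application of duality plus a symmetry remark.
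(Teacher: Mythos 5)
Your opening computation is exactly the intended derivation: the paper gives no separate proof of Corollary~\ref{cor5}, which is meant to follow from Theorem~\ref{cor4} by taking $f=\ett_A$ (so $\|f\|_1=\nu(A)$, $\int f\,d\nu=\nu(A)$) and pairing with $\ett_B$ through the duality \eqref{dualityL}:
\[
\Bigl|\int\bigl(L^{n+k}\ett_A-\nu(A)\bigr)\,\ett_B\,d\nu\Bigr|
\;=\;\bigl|\nu(A\cap\T^{-(n+k)}B)-\nu(A)\nu(B)\bigr|
\;\le\;2\bigl(\tfrac{4}{5}\bigr)^{n}\nu(A)\nu(B)\;\le\;2\bigl(\tfrac{4}{5}\bigr)^{n}.
\]
That part is correct and is the whole content of the corollary.

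The closing ``symmetry remark'' is a genuine gap, and it cannot be repaired. First, the palindromic identity $\nu(\alpha)=\nu(\bar\alpha)$ does hold for $SG$ (symmetric $\a_s$, $\E=\frac12 I$, cyclicity of the trace), but it does not interchange $\nu(A\cap\T^{-m}B)$ with $\nu(\T^{-m}A\cap B)$ for the \emph{same} sets: for cylinders $A=[\alpha]$, $B=[\beta]$ of lengths $k$ and $l$ one gets
$\nu(\T^{-m}[\alpha]\cap[\beta])=\sum_{|\gamma|=m-l}\nu(\beta\gamma\alpha)=\sum_{|\gamma|=m-l}\nu(\bar\alpha\bar\gamma\bar\beta)=\nu\bigl([\bar\alpha]\cap\T^{-(m-l+k)}[\bar\beta]\bigr)$,
so the words get reversed and, more importantly, the time gap becomes $m-l+k$, which tends to $-\infty$ when you approximate a general Borel $B$ by cylinders with $l\to\infty$; the bound you would import degenerates. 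Second, and decisively, the inequality in the literal form $\nu(\T^{-(n+k)}A\cap B)$ with arbitrary $B\in\CF$ is false: take $B=\T^{-(n+k)}A$, so that the left-hand side equals $\nu(A)(1-\nu(A))$, which does not decay in $n$; for $SG$ with $A$ a length-one cylinder, $\nu(A)=\frac13$ and $\nu(A)(1-\nu(A))=\frac29>2(\tfrac{4}{5})^n$ for $n\ge 10$. The obstruction is that $\T^{-(n+k)}A$ depends on coordinates $[n+k,n+2k)$, which a general $B\in\CF$ is also allowed to see. So the printed statement simply has the pullback on the wrong set (the same swap occurs in Corollary~\ref{cor3}); the correct assertion is the one your duality computation proves, namely $|\nu(A\cap\T^{-(n+k)}B)-\nu(A)\nu(B)|\le 2(\tfrac{4}{5})^n$, and you should say so explicitly rather than invoke a time-reversal symmetry that does not do what you need.
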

\begin{remark}
  In this simplified case, the rate of convergence can be expressed in
  terms of $\theta_1=\frac{4}{5}$ only. In Theorem \ref{thm1} we also
  need to consider the constant $\theta_2$ from Lemma \ref{thmD} below
  in order to obtain the uniform rate of convergence expressed, e.g.,
  in Corollary \ref{cor1}. Notice that in Theorem \ref{cor4} we use
  members of $\CF_k$ as test functions and we need to start the
  convergence at this level $k$, whereas in Corollary \ref{cor1} we
  may use any $f\in L_\gamma^2$ and we do not relate the number of
  iterates to the (lack of) regularity of $f$. Nevertheless, Theorem
  \ref{cor4} may give some insight about the rate of convergence from
  a practical point of view. We have given an argument for this
  special case only for $SG$, just in order to simplify matters, but a
  similar argument for this type of result for convergence in the
  $\|\cdot \|_\infty$-norm may be devised in the general Kusuoka
  measure case. The constant $2$ in front of the $(\frac{4}{5})^n$
  can be interpreted as the dimension of the space of harmonic
  functions modulo constants, i.e., the number of boundary points
  minus one. That we do not have a ``general'' uniform, but unknown, 
  constant $C$ in front of the $(\frac{4}{5})^n$ is due to a strict contraction 
  result, Lemma \ref{lem:strict}, which we have obtained for symmetric
  restriction maps, and which is thus valid for all $SG_n$.
\end{remark}

\begin{remark}
  If we have the probability weights
  $p_j(x)= \frac{1}{15}+\frac{12}{15}\frac{d\nu_j}{d\nu}$, $j=0,1,2$,
  for the iterated function system $\{F_j\}_{j=0}^2$ that defines the
  Sierpi\'nski gasket (as in Bell, Ho and Strichartz \cite{str3}), where
  $\nu_j$ are the energy measures so that the Kusuoka measure
  $\nu=\sum_{j=0}^2 \nu_j$, then a standard conjugation between
  symbolic space and the fractal $SG$ gives the same rate of
  convergence (namely $(4/5)^n$ for $SG$) with respect to the
  essential supremum norm for an associated transfer operator defined
  on H\"older continuous functions $f$ on $SG$ as
  $Lf(x)=\sum_{j=0}^2 p_j(x)f(F_j(x))$. Notice that we can view the
  natural extension of the full left shift on the symbol space as an
  iterated function system, where the probability $p_j(x)$ of choosing
  the symbol $j$ to go from the state $x=(x_0, x_1,\ldots)$ to
  $(j, x_0, x_1,\ldots)$ is given by $g(jx)$.
\end{remark}

\begin{example}\label{SG}
  We have explicitly computed the rate of convergence in the case of $SG_n$. For
  $SG_3$, the level 3 Sierpi\'nski gasket, is generated by the
  iterated function system $F_j(x)=\frac{1}{3}x+\frac{2}{3}v_j$,
  $j=1, 2, 3, 4, 5, 6$, where $v_1, v_2, v_3$ are the vertices of an
  equilateral triangle and where $v_4=\frac{v_2+v_3}{2}$,
  $v_5=\frac{v_1+v_3}{2}$, $v_6=\frac{v_1+v_2}{2}$. We approximate
  $SG_3$ with a graph sequence, and we use the same two initial
  orthonormal harmonic functions as in the case of $SG$ (the three
  boundary points are the same). We obtain two families of matrices
  (the restriction maps) $A_s$ with three in each family being
  rotations by $120^{\circ}$ of each other. That is, we have one
  family of three matrices that restricts values to the three
  triangles with one vertex at the original vertex points
  $v_1, v_2, v_3$ and another family of three matrices that restricts
  values to the three other triangles. There are similar and obvious
  ways to describe the other fractals in the family $SG_n$.

  In these cases we get $\theta_1=\frac{5}{7}$ for $SG_3$,
  $\theta_1=\frac{2822}{4223}$ for $SG_4$ and
  $\theta_1=\frac{209527}{327611}$ for $SG_5$.
\end{example}

\section{The transfer operator on the space $\VV$}

Instead of working with the $L^2$-space of regular functions on $\X$,
the idea is to work with a Hilbert space $\VV$ consisting of
``operator valued process limits'', where the shift operator $\T$ is
defined. The action of the corresponding transfer operator has a
simple explicit description.  The Hilbert space $L^2(\X,\nu)$ has a
representation as a subspace $\LL$ of $\VV$. The space $\LL$ itself is
not invariant under the action of $\L$, but the transfer operator $L$
on $L^2(\X,\nu)$ can be recovered as a dilation such that
$L^k=\Q \circ\L^k$, where $\Q$ is the orthogonal projection onto
$\LL$. We show in subsection \ref{cont-l} that $\Q$ is continuous
with respect to the $\gamma$-norm, for suitable $\gamma$, and, hence,
that the spectral properties of $\L$ on $\VV$ carry over to results on
the action of $L$ on the spaces $L^2(\X,\nu)$.

\subsection{Construction of a graded Hilbert space of process limits}

We will use a general construction of a certain graded Hilbert space
of process limits on $\X$ under a given system of ``restriction
operators'' $\psi_s$, $s\in S$. The Hilbert space is modeled by the
martingale representation of functions in $L^2(\X,\mu)$, but with the
difference that they do not necessarily converge to functions. The
construction can be generalised to non-self similar systems using a
systematic approach based on direct and inverse limits.

\subsubsection{$E$-valued processes and finite degree process limits}

Let $E$ denote a finite-dimensional linear space. An \emph{$E$-valued
  process} is a function $f: S^*\to E$, where $S^*$ is the set of
initial cylinders. A process $f(\al)$, $\al\in S^*$, can be identified
with the sequence $f_n(x)$ of $\CF_n$-measurable functions
$f_n(x)=f([x]_n)$.

Let $\EE_0$ denote the direct sum $\bigoplus_{\al} E_\al$, where for
all initial cylinders $\alpha\in S^*$, $E_\al$ is a copy of $E$.  We
interpret $\EE_0$ as the space of all processes $f(\al)$ such that
there is a smallest integer $\deg(f)\geq 0$ where $f(\al)=0$ for all
cylinders $\al$ of length $\ell(\al) > \deg(f)$. We refer to $\deg(f)$
as the \emph{degree} of $f\in \EE_0$.

We will need a construction which, more formally (and more generally),
involves taking direct and inverse limits.  Given a set
$\psi=\{\psi_s: s\in S\}$ (a \emph{self similar} system of restriction
maps) of linear maps $\psi_s: E\to E$, let $\varPsi:\EE_0\to\EE_0$ be
the map $f\mapsto \varPsi f$ given by
$$ (\varPsi f) (\al s) = \psi_s f(\al), \quad 
(\varPsi f)(\emptyset)=f(\emptyset). $$
We assume that $\varPsi:\EE_0\to\EE_0$ is an injective map.

Let $\EE_*$ be the space $\EE_0=\bigoplus_\al E_\al$ modulo the
subspace $\opn{Ker} (I-\varPsi)$. The space $\EE_*$ is the space of
limit orbits for the map $\varPsi$. Elements $f$ in $\EE_*$ can be
represented by $E$-valued processes which are ``eventually constant''
in the following sense: There is a smallest number $\deg(f)\geq0$ where
$f(\al s)=\psi_sf(\al)$ for $\ell(\al)>\deg(f)$. Two processes,
$f(\al)$ and $g(\al)$ of degree at most $n$ are identified if
$f(\al)=g(\al)$ for all $\al\in S^n$. The finite dimensional space
$\EE_n$ of elements $f\in\EE_*$ of degree $\deg(f)\leq n$, consists of
those elements $f\in\EE_*$ that can be written on the form
$f = \tilde f+\Ker(I-\varPsi)$ where $\tilde f\in\EE_0$ has degree
less than or equal to $n$.

\subsubsection{Invariant bilinear forms}

A bilinear form $\mscr F$ on $\EE_*$ is \emph{local} if it has the
form
\begin{equation}
  \mscr F(f,g) = \lim_{n\to\infty} \sum_{\al\in S^n} \mscr F(\al)(f(\al),g(\al)), 
  \quad\forall\, f,g\in \EE_*.\label{local}
\end{equation}
where, for each $\al$, $\mscr F(\al)$ is a given form on $E$. The
local form $\mscr F$ on $\EE_*$ is well defined if, for
all $\al\in S^*$, we have the invariance condition
\begin{equation}
  \mscr F(\al)(u,v) = \sum_s \mscr F(\al s)(\psi_s u, \psi_s v), 
  \quad\forall\, u,v\in E.\label{invariant0}
\end{equation}
In this case the limit in \qr{local} is the limit of an eventually
constant sequence.

In particular, a given fixed form $\mscr E$ on $E$ gives a
\emph{constant} invariant form on $\EE_*$ if and only if
\begin{equation}
  \E(u,v) = \sum_s \E(\psi_s u, \psi_s v), \quad\forall\, u,v\in E.\label{invariant}
\end{equation}

\subsubsection{The Hilbert space of $E$-valued processes and the
  orthogonal decomposition}
Given a restriction system $\psi=\{\psi_s\}$ and an local invariant
positive definite form $\mscr F$ satisfying \qr{invariant0}, 
we obtain a non-degenerate inner product $\scp fg_\EE$ on
$\EE_*$ as the limit in \qr{local}. We let $\EE = \EE(\psi,\mscr F)$
be the Hilbert space obtained as the completion of $\EE_*$ with
respect to the norm 
$$\| f \|_{\EE} = (\scp ff_\EE)^{1/2}. $$

The spaces $\EE_n$ of processes of degree less than $n$ are closed
subspaces of $\EE$. We can define $\EE\ii n:=\EE_n\ominus \EE_{n-1}$
as the orthogonal complement of $\EE_{n-1}$ inside $\EE_n$. This gives
us an orthogonal decomposition of $\EE$, so that any $f\in\EE$ has a
unique expression $f = \sum_{n=0}^\infty f\ii n$, where
$f\ii n\in \EE\ii n$ and
$\| f \|^2_\EE = \sum_{n=0}^\infty \| f\ii n\|^2_\EE$.  This
orthogonal decomposition lets us express a process limit $f$ by a
\emph{unique} representative process
$$ f(\alpha)= f\ii0(\al)+\dots + f\ii
n(\al),\quad\text{for } \al\in S^n. $$

For a number $\gamma\in(0,1)$, we define the Banach space $\EE_\gamma$
with the norm $\norm x_\gamma$
$$ \norm{f}_{\EE_\gamma} = \sum_{i=0}^\infty \gamma^{-i} \norm{f\ii i}_\EE. $$

\subsubsection{The martingale representation of
  $L^2(\X,\mu)$}\label{sec:mgalconstr}

Note that for any probability measure $\mu$ on $\X$, we can construct
the martingale representation of the space $L^2(\X,\mu)$ as a graded
Hilbert space according to the scheme above as follows: Let $E$ be the
space $\RR$ of real numbers and let the restriction system
$\{\psi_s\}$ be given by $\psi_s(x)=x$. This gives the usual
restriction of functions and the corresponding limits in $\EE_*$ are
locally constant functions: The spaces $\EE_n$, $n\geq 0$, will
correspond to the spaces of $\CF_n$-measurable functions.  We use a
non-constant local invariant form $\mscr F(\al)$ given by
$\mscr F(\al)(x,y) = \mu(\al) xy$ and the invariance condition
\qr{invariant0} holds since
$$ \sum_s \mu(\al s) xy = \mu(\al) xy. $$
Since, for $f,g\in\EE_n$, the limit $\scp fg_\EE$ in \qr{local}, gives
$$ \scp fg_\EE = \sum_\alpha f(\al) g(\al) \mu(\al) = \int f(x)g(x)\d\mu(x).$$
The closure $\EE$ of $\EE_*$ will hence give an isometric copy of the
space $L^2(\X,\mu)$.

\subsection{The operator valued Hilbert space $\VV$}

We show (by copying some arguments given by Kusuoka in
\cite{kusuoka2}) that, starting from a system $\{\a_s\}$ of
restriction maps $\a_s: H\to H$ on a finite dimensional space $E$, we
can define a Hilbert space $\VV = \VV(\{\a_s\})$ of process limits
taking values in the space $\B(E)$ of linear operators on $E$. The
inner product $\Scp \cdot\cdot$ on $\VV$ is induced by a constant
bi-invariant (see \qr{lrinvariance}) positive definite bilinear form
$\scpe\cdot\cdot$ on $\B$. The space $\VV$ also allow us to define the
shift operator $\T f$, $f\in\VV$, as an isometric injective map.

Kusuoka's paper \cite{kusuoka2} starts with a self-similar system
$\{\a_s\}$ of injective maps on a finite dimensional space $H$ which
is irreducible in the sense \qr{irredorig}.  It is proved that, modulo
a re-scaling (i.e.\ we replace $\a_s$ with $\lambda \a_s$ for some
$\lambda>0$), there exists a unique invariant positive definite form
$\E$ on $H$. In addition, there is a corresponding \emph{dual
  invariant} positive definite form $\mscr R$ defined on $H^*$, such
that
\begin{equation}
  \mscr R(z,w) = \sum_s \mscr R(\a_s^* z, \a_s^* w), 
  \quad\forall\, z,w\in H^*.\label{dualinvariant}
\end{equation}
From the pair $\E$ and $\mscr R$, we define the inner product
$\scpe AB$ on the space of operators $\B(E) \cong H\otimes H^*$ by
setting
\begin{equation}\label{opinv}
  \scpe {u\otimes v^*}{f\otimes g^*} = \E(u,f)\mscr R(v^*,g^*),
  \quad u,f\in H,\ v^*,g^*\in H^*,
\end{equation}
for rank one operators and then extend it by bilinearity. If we have a
Hilbert space structure on $H$ (and $H^*$) so that $\E$ and $\R$ are
represented as symmetric operators in $\B$ then the inner product is
given by $\scpe AB = \Tr(B^*\E A \R)$. In particular, if we assume that the
inner product on $H$ is the form $\R$ --- so that $\R$ is represented by the
identity operator --- then we see that \qr{invariant} and
\qr{dualinvariant} take the forms \qr{fundeq1} and \qr{fundeq2},
which were our starting points.

The system $\{\a_s\}$ acts on $F\in\B(E)$ both from the left,
$\a_s F = \a_s \circ F$, and from the right, $F\a_s = F\circ\a_s$. The
form $\scpe FG$ is then both left and right invariant, i.e.\
\begin{equation}\label{lrinvariance}
  \scpe FG = \sum_s \scpe{\a_s F}{\a_s G} = \sum_s \scpe{F\a_s}{G\a_s}, 
\end{equation}
on account of $\E$ satisfying \qr{invariant} and $\mscr R$ satisfying
\qr{dualinvariant}.  In particular, we can consistently define the
corresponding Kusuoka measure on $(\X,\CF)$ by taking
$\nu(\al)=\scpe{\A(\al)}{\A(\al)}$, where $\A(\al)$ denotes the
composition $\a_{\al_1}\circ \dots\circ \a_{\al_n}$.

The left invariance of $\scpe\cdot\cdot$ in \qr{lrinvariance}, states
that the form $\scpe\cdot\cdot$ on the finite dimensional space $\B$
is invariant with respect to the restriction system
$\psi_s(F)=\a_s F$. We obtain, by the general construction above, a
graded Hilbert space $\VV$ with a scalar product
\begin{equation}\label{Scpdef}
  \Scp FG = \lim_{n\to\infty} \sum_{\al\in S^{n}} \scpe {F(\al)}{G(\al)}. 
\end{equation}
The grading means that $\VV$ has the orthogonal decomposition
$\oplus_{n\geq 0}\VV\ii n$ so that
$$\Scp FG = \sum_{n\geq0} \Scp{F\ii n}{G\ii n}. $$ 

An element $G(\al s)\in \VV_n$, $\al s\in S^n$, belongs to $\VV\ii n$,
$n\geq 1$, if and only if the relation
\begin{equation}
  \sum_s \a^*_s \,\E\, G(\al s)  = \mb 0 \label{orth1}
\end{equation}
holds for all $\al \in S^{n-1}$. This follows since if
$F(\alpha)\in\VV_{n-1}$ then
$$ 
\Scp FG = \sum_\al \sum_s \scpe{G(\al s)}{\a_s F(\al)} = \sum_\al
\Tr\left( \left(\sum_s \a_s^* \E G(\al s)\right){F(\al)}\right).
$$ 
This can be zero for arbitrary $F$ only if $G$ satisfies \qr{orth1}.

A process $F\in\VV$ belongs to $\VV\ii0$ if and only if
$F(\al) = \Al F_0$ for some constant operator $F_0=F(\emptyset)\in\B$.
The process $\A(\al)$ denotes the ``identity process'' $\A\in\VV\ii0$
with $\A(\emptyset)=I$.

\subsection{The shift operator and the transfer operator on $\VV$ and a spectral gap}
Because of the right invariance in \qr{lrinvariance}, we can
furthermore consistently define the left shift operator $\T$ as the
injective map $\T:\VV \to \VV$ given by
\begin{equation}\label{defTonBB}
  \T F (s\alpha) = F(\alpha)\a_s. 
\end{equation}
It is an isometric embedding in the sense that
$\Scp FG = \Scp{\T F}{\T G}$.  Note also that if $F\in \VV_*$ has
finite degree, then $\deg (\T F)=\deg (F) + 1$ and that the space
$\VV\ii k$, $k\geq 0$, is embedded by $\T$ into the space
$\VV\ii{k+1}$, since $\T$ manifestly preserves the orthogonality
condition \qr{orth1}.

The \emph{transfer operator} $\L:\VV\to\VV$ is defined as the dual of
$\T$, i.e.\ $\L:\VV\to\VV$ satisfies $\Scp{\L F}{G}=\Scp{F}{\T G}$ for
all $F,G\in\VV$.  For $F,G\in \VV_*$, we have
\begin{align*}
  \Scp F{\T G} 
  &= \sum_\al\sum_s \Tr\left(\a_s^* G(\al)^*\E F(s\al)\right)\\
  &= \sum_\al \Tr\left(G(\al)^*\, \E\, (\sum_s F(s\al)\a_s^*)\right)
\end{align*}
provided $\ell(\al)$ is large enough.  It follows that the operator
$\L:\VV \to \VV$ is explicitly given by the expression
\begin{equation}\label{explicitL}
  (\L F)(\al) = \sum_{s\in S} F(s\al) \a_s^*,
\end{equation}
which is a simple multiplicative operator, analogous to the transfer
operator for a Bernoulli measure.

Since it is dual to the isometric embedding $\T$, it must be that $\L$
is a contraction, i.e.\ $\|\L F\|\leq \|F\|$. From \qr{fundeq2}, it is
also clear that the process $\A(\al)$ is an eigenvector to $\L$
corresponding to eigenvalue $\lambda=1$, since
$\sum_s \A(\al)\a_s\a_s^*=\A(\al)$.  The operator $\L$ acts as a
reverse shift operator on the spaces $\T^k(\VV)$, $k>0$. Its essential
spectral radius is therefore $1$.

Note also that $\L$ preserves the space of constants $\VV\ii 0$: From
\qr{explicitL} it follows that if $G(\al)=\Al G_0$, $\al\in S^*$, then
\begin{equation}\label{LonV0}
\L G(\al) = \Al \M(G_0), 
\end{equation}
where, $\M:\B\to\B$ is the operator defined in \qr{Mdef}.

For $0<\gamma\leq1$, we define the Banach space $\VV_\gamma$ by the
norm
\begin{equation}
  \label{gammaopdef}
  \|F\|_\gamma = \sum_{n=0}^\infty \gamma^{-n}\| F\ii n \|.
\end{equation}
We want to show that the operator $\L_\gamma=\L\vert_{\VV_\gamma}$,
i.e.\ $\L$ restricted to $\VV_\gamma$, has a spectral gap.  Recall
that the spectrum $\sigma(\L)$ of an operator $\L:\VV\to\VV$ is the
set of complex numbers $z$ such that the operator $zI-\L$ is not
invertible. 

\begin{theorem}\label{contr1}
  Assume $0<\gamma<1$ and consider the operator $\L_\gamma$ of $\L$
  restricted to $\VV_\gamma$. Then $\L_\gamma$ has the eigenvalue
  $\lambda=1$ corresponding to the unique eigenvector $\A$. There is
  also a constant $\theta_1<1$, such that
  $\sigma(\L_\gamma)\setminus\{1\}$ is contained in the disc of radius
  $\rho= \max\{\gamma,\theta_1\}$.
\end{theorem}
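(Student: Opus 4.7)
The plan is to exploit the grading $\VV = \bigoplus_{n \geq 0} \VV\ii n$ and exhibit a block upper triangular structure for $\L$. First I would verify two structural facts. The operator $\L$ preserves $\VV\ii 0$, acting there as the matrix operator $\M$ via \qr{LonV0}. Moreover, for each $n \geq 1$, $\L$ maps $\VV\ii n$ into $\VV\ii{n-1}$: the degree bound $\deg(\L F) \leq n-1$ is a direct computation from \qr{explicitL} using the relation $F(t\al s) = \a_s F(t\al)$ that characterises degree-$n$ processes when $\ell(t\al) \geq n$, while the orthogonality $\L F \perp \VV_{n-2}$ follows from $\Scp{\L F}{G} = \Scp{F}{\T G}$ combined with $\T(\VV_{n-2}) \subset \VV_{n-1}$ and $F \perp \VV_{n-1}$.

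Next I would isolate the invariant linear functional $\phi : \VV \to \RR$ defined by $\phi(F) = \scpe{F\ii 0(\emptyset)}{I}$. Two computations show $\phi \circ \L = \phi$. First, the fixed-point identity $\M^*(\E) = \E$ gives $\scpe{\M(B)}{I} = \Tr(\E \M(B)) = \Tr(\M^*(\E) B) = \scpe{B}{I}$, so the level-$0$ block preserves $\phi$. Second, from \qr{explicitL} one reads off $(\L F\ii 1)(\emptyset) = \sum_s F\ii 1(s) \a_s^*$, and then the orthogonality condition \qr{orth1} applied with $\al = \emptyset$ yields $\Tr\bigl(\E\,(\L F\ii 1)(\emptyset)\bigr) = \Tr\bigl(\sum_s \a_s^* \E F\ii 1(s)\bigr) = 0$. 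Since $\phi(\A) = \scpe{I}{I} \neq 0$, this produces a direct sum decomposition $\VV = \opn{span}\{\A\} \oplus \VV\ii{0,\perp} \oplus \VV^+$, where $\VV\ii{0,\perp} = \VV\ii 0 \cap \ker\phi$ and $\VV^+ = \bigoplus_{n \geq 1}\VV\ii n$, with respect to which $\L$ is block upper triangular.

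The spectral analysis then reduces to bounding each diagonal block. The block on $\opn{span}\{\A\}$ is the identity, supplying the eigenvalue $1$. On $\VV\ii{0,\perp}$, $\L$ acts as $\M$ restricted to the $\M$-invariant hyperplane $\{B \in \B : \scpe{B}{I} = 0\}$, and I take $\theta_1$ to be the spectral radius of $\M$ there; the irreducibility hypothesis \qr{irred} ensures $\theta_1 < 1$. On $\VV^+$, using $(\L^+ F)\ii m = \L F\ii{m+1}$ for $m \geq 1$ together with the contractivity $\|\L F\ii{m+1}\| \leq \|F\ii{m+1}\|$ inherited from the isometry of $\T$,
\[
\|\L^+ F\|_\gamma \;=\; \sum_{m \geq 1} \gamma^{-m}\,\|\L F\ii{m+1}\| \;\leq\; \gamma\sum_{n \geq 2}\gamma^{-n}\|F\ii n\| \;\leq\; \gamma\,\|F\|_\gamma,
\]
so $\|\L^+\|_\gamma \leq \gamma$. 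A standard resolvent argument for bounded upper triangular operators then delivers $\sigma(\L_\gamma) \subset \{1\} \cup \sigma(\M|_\perp) \cup \sigma(\L^+) \subset \{1\} \cup D(0, \max\{\theta_1, \gamma\})$; uniqueness of $\A$ as the $1$-eigenvector follows because the other two diagonal blocks have spectral radius strictly less than one.

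The main obstacle I anticipate is extracting $\theta_1 < 1$ from \qr{irred}, which is strictly stronger than merely ruling out a fixed point of $\M$ on the invariant hyperplane, since one must exclude eigenvalues anywhere on the unit circle. The natural route is to combine the bi-invariance identity \qr{scpfundeq1}, which normalises $\sum_{\al \in S^k}\|\Al F\|_\E^2 = \|F\|_\E^2$, with the positive defect $c_k = \inf_F \sum_{\al \in S^k} \scpe{\Al F}{\Al}^2 > 0$ to produce a quantitative contraction estimate for the iterates $\M^k$, and thereby to confine the spectrum of $\M$ on $\B \ominus_\E \opn{span}\{I\}$ strictly inside the open unit disc.
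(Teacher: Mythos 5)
Your reduction to the grading is sound and matches the paper's own strategy: the paper proves exactly your two block estimates, namely $\|\L X\|_\gamma\le\gamma\|X\|_\gamma$ for $X\in\VV\ii k$, $k\ge1$ (from the contractivity \qr{parsid} inherited from the isometry $\T$, together with $\L(\VV\ii k)\subset\VV\ii{k-1}$), and $\|\L^kX\|\le C\theta_1^k\|X\|$ for $X\in\VV\ii0$ with $\Scp X\A=0$. It then combines them by splitting $Y=\sum_jY\ii j$ and summing, rather than by your block-triangular resolvent argument, but the two packagings are equivalent: your functional $\phi$ is just $F\mapsto\Scp F\A$, and its $\L$-invariance is the paper's observation $(\L X\mid\A)=(X\mid\T\A)=(X\mid\A)$. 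So the architecture of your proof is fine.

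The genuine gap is the step you yourself flag at the end: proving $\theta_1<1$, i.e.\ that the spectral radius of $\M$ on the $\E$-trace-free hyperplane is strictly less than $1$. The route you sketch --- combining the bi-invariance \qr{scpfundeq1} with the defect $c_k>0$ from \qr{irred} --- attacks the wrong operator. The quantity $\sum_{\al\in S^k}\scpe{\Al F}{\Al}^2$ measures how much of $F$ survives projection onto the directions $\Al$ under \emph{left} multiplication $F\mapsto\Al F$; in the paper it governs the \emph{other} constant, $\theta_2$, i.e.\ the continuity of the projection $\Q$ in Lemma~\ref{thmD}. It gives no direct control on the conjugation $\M^k(F)=\sum_{\al\in S^k}\Al F\Al^*$. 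The paper's Lemma~\ref{c} proves $\theta_1<1$ by a different, Perron--Frobenius-type mechanism: since $\M$ preserves the cone of positive semidefinite matrices and $\M(I)=I$, each eigenvalue of $\M(B)$ for Hermitian $B$ is a convex combination $\sum_ic_{ij}\lambda_i$ of the eigenvalues of $B$, with $c_{ij}=\scp{\M(P_i)}{Q_j}/\scp{Q_j}{Q_j}\ge0$ and $\sum_ic_{ij}=1$; a peripheral eigenvalue then forces $\M(P_1)=P_1$ for a spectral projection $P_1$, hence a common invariant subspace for the maps $\a_s^*$, which contradicts the weaker hypothesis \qr{irredorig} unless $P_1=I$, and that case is excluded by $\scpe BI=0$. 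Without this (or an equivalent) argument your proof does not close, because everything else you do correctly reduces the theorem to precisely this finite-dimensional spectral fact.
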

Note that the essential spectral radius of $\L_\gamma$ is $\gamma$.

\subsection{Proof of Theorem~\ref{contr1}}
We prove the following two bounds. Firstly that
\begin{equation}\label{gol0}
  \| \L X \|_\gamma  = \gamma^{-k+1}\| (\L X)\ii {k-1} \| \leq \gamma \| X \|_\gamma, 
  \quad\forall\, X\in \VV\ii k,\ k\geq 1.
\end{equation}

Secondly, we show that for some $\theta_1<1$ and some $C\geq 1$, we have that
\begin{equation}\label{gol1}
  \| \L^k X \| \leq C\theta_1^k \, \| X \|, 
  \quad\forall\, X\in \VV\ii 0,\quad \Scp X\A = 0. 
\end{equation}

For $\rho=\max\{\theta_1,\gamma\}$, it follows from \qr{gol0} and
\qr{gol1} that for $Y=Y^{(0)}+Y^{(1)}+\cdots$, such that $\Scp Y\A=(Y^{(0)}|\A)=0$, we have 
\begin{align*}
  \| \L^k Y \|_\gamma &\leq \sum_{j=0}^\infty \|\L^k Y\ii j\|_\gamma \\
                    &\leq 
                      \sum_{j=0}^k \gamma^{j}\cdot C\theta_1^{k-j} \| Y\ii j \|_\gamma +
                      \sum_{j=k+1}^\infty \gamma^{k} \cdot \| Y\ii j \|_\gamma.
\end{align*}   
Here we use that $(\L^j Y^{(j)}  | \A)=0$ which follows from the observation that
$$(\L X|\A)=(X|T\A)=(X|\A).$$ Hence,                
\begin{equation*}                    
                     \| \L^k Y \|_\gamma \leq C\rho^k \cdot \sum_{j=0}^\infty \| Y\ii j\|_\gamma
                    = C \rho^k \cdot \norm{Y}_\gamma.
\end{equation*}
This shows that the spectral radius of $\L$ restricted to the space of the elements $Y$ in $\VV_\gamma$ such that $(Y | \A)=0$ is less than $\rho$.

\subsubsection{Proof of the bound \qr{gol0}}
The bound \qr{gol0} is a consequence of $\L$ being a contraction, that is
$$\| \L X \|\leq \| X \|. $$
Since $\T$ is an isometric embedding any ON-basis
$\{E_i\}$ of $\VV$ is transported to
an ON-basis $\{\T E_i\}$ of $\T(\VV)$ and, by Parseval's identity, the
squared norm satisfies
\begin{equation}\label{parsid}
  \| \L X \|^2 = \sum_i \Scp{\L X}{E_i}^2 = \sum_i \Scp{X}{\T E_i}^2 \leq \| X \|^2.
\end{equation}

If $X\in \VV\ii k$, $k\geq 0$, then we find that
$\L X \in \VV\ii{k-1}$, since the dual operator, $\T$, restricts to an
isometric embedding of $\VV\ii {k-1}$ into $\VV\ii k$. Taking an
ON-basis $E_l\ii j$ of $\VV\ii j$ and using \qr{parsid} shows that
$$ 
\|\L X\| = \|(\L X)\ii {k-1}\|= \|\L \circ \opn{proj}_W X\| \leq \| X
\|,
$$ 
where $\opn{proj}_W$ is the orthogonal projection onto the subspace
$W=\T(\VV\ii{k-1})$. \qed

\subsubsection{Proof of the bound \qr{gol1}}
For convenience, we extend the setting to complex matrices in order 
to include the case of anti-symmetric matrices.
 Let $\mscr D$ be the space of Hermitian (self-adjoint) operators
$B$ such that $\scpe BI=\Tr(\E B)=0$. We extend the operator $\M$ to complex matrices 
and $\mscr D$ is then an $\M$-invariant subspace, since
$$\Tr(\E \M(B)I) = \Tr(\M^*(\E) B) = \Tr(\E B) $$ 
on account of \qr{fundeq1}.

\begin{lemma}\label{c}
Assume \eqref{irredorig} holds. There are constants $\theta_1<1$ such that for any 
$B\in\mscr D$ we have
\begin{equation}\label{norminf}
  \norme{\M^k(B)} \leq C_0 \theta_1^k \norme B,
\end{equation}
for some $C_0>0$. 
\end{lemma}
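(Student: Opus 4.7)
The plan is to prove Lemma~\ref{c} via three steps: $\M$-invariance of $\mscr D$, a non-strict contraction on $\B$, and then a strict contraction on $\mscr D$.

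First, I would verify that $\M$ preserves $\mscr D$. Hermiticity is immediate from the formula $\M(B) = \sum_s \a_s B \a_s^*$, and the trace-zero condition is preserved because $\scpe{\M(B)}{I} = \Tr(\E\M(B)) = \Tr(\M^*(\E) B) = \Tr(\E B) = \scpe BI$, using $\M^*(\E) = \E$ from \qr{fundeq1}.

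Second, I would show that $\norme{\M(B)} \leq \norme B$ on all of $\B$. The cleanest route is the isometric embedding $\iota : \B \to \VV\ii 0$ given by $\iota(B)(\al) = \Al B$; this is an isometry thanks to the left bi-invariance in \qr{scpfundeq1}, since $\norm{\iota(B)}_{\VV}^2 = \lim_n \sum_{\al\in S^n} \scpe{\Al B}{\Al B} = \norme B^2$. Relation \qr{LonV0} yields $\L \circ \iota = \iota \circ \M$, and since $\L$ is a contraction on $\VV$ (established in the proof of \qr{gol0} above via Parseval), $\M$ is a contraction with respect to $\norme\cdot$.

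Third, I would upgrade this to a strict contraction on $\mscr D$. Because $\mscr D$ is finite-dimensional, it suffices to show that the spectral radius of $\M|_{\mscr D}$ is strictly less than one; then \qr{norminf} holds for any $\theta_1$ strictly above this spectral radius, with $C_0$ absorbing the polynomial Jordan-block growth. To rule out eigenvalues of modulus one, I would argue by contradiction: if $\M(B) = \lambda B$ with $|\lambda| = 1$ and $B \in \mscr D \setminus \{0\}$, $\norme B = 1$, then $\norme{\M^k(B)} = 1$ for every $k$. Meanwhile, the scalars $m_\al := \scpe{\Al B}{\Al} = \Tr(\E \Al B \Al^*)$ satisfy $\sum_{\al\in S^k} m_\al = \scpe BI = 0$ yet $\sum_{\al\in S^k} m_\al^2 \geq c_k > 0$ by the quantitative irreducibility hypothesis \qr{irred}. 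I would then translate this non-vanishing ``variance'' of the $m_\al$'s into a strict defect $\norme{\M^k(B)}^2 \leq 1 - \delta$ with $\delta = \delta(c_k) > 0$, contradicting norm preservation.

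The main obstacle is this last conversion. The bound from \qr{irred} is a scalar, one-sided estimate that controls only the pairings $\scpe{\Al B}{\Al}$ of the summands $\Al B \Al^*$ against $I$, while what is needed is an operator-valued defect for $\M^k(B) = \sum_\al \Al B \Al^*$ itself. The natural tool is a Pythagorean decomposition in $\VV$: writing $\norme{\M^k(B)}^2 = \norm{\L^k \iota(B)}_{\VV}^2 = \norm{\iota(B)}_{\VV}^2 - \norm{(I - P_k) \iota(B)}_{\VV}^2$, where $P_k = \T^k \L^k$ is the orthogonal projection onto $\T^k(\VV)$, and then bounding $\norm{(I - P_k)\iota(B)}_{\VV}^2$ from below by a multiple of $\sum_\al m_\al^2$. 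Making this lower bound quantitative, with a strict contraction factor $\theta_1$ depending only on $c_k$, is the delicate heart of the proof.
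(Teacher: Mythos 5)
Your steps (i) and (ii) --- the $\M$-invariance of $\mscr D$, and the non-strict bound $\norme{\M(B)}\leq\norme B$ obtained from the embedding $\iota(B)=\A B$ into $\VV\ii 0$ together with the contractivity of $\L$ --- are correct and consistent with the paper, as is the standard reduction of \qr{norminf} to the claim that the spectral radius of $\M\vert_{\mscr D}$ is strictly less than one. The gap is in step (iii), which is where the entire content of the lemma lives. You propose to rule out a modulus-one eigenvalue by converting the quantitative irreducibility \qr{irred} into a norm defect, via a lower bound on $\norm{(I-P_k)\iota(B)}^2=\norme{B}^2-\norme{\M^k(B)}^2$ in terms of $\sum_{\al\in S^k}\scpe{\Al B}{\Al}^2$. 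You correctly flag this conversion as the delicate heart of the argument, but you do not supply it, and it does not follow from anything general: by \qr{normQF} the quantity $\sum_\al\scpe{\Al B}{\Al}^2$ is essentially $\norm{\Q_k\,\iota(B)}^2$, the component of $\iota(B)$ along the function space $\LL_k$, while the defect measures the component of $\iota(B)$ orthogonal to $\T^k(\VV)$; these are two a priori unrelated subspaces, and no inequality between the two norms is established. So the proof is incomplete exactly at the step that needs proving. (A further wrinkle: a modulus-one eigenvalue of the real-linear operator $\M\vert_{\mscr D}$ may be complex with a non-Hermitian eigenvector, so the contradiction must be run on the complexification, where \qr{irred} --- stated for symmetric $F$ --- does not apply directly.)

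There is also a hypothesis mismatch. Even if you closed the gap, your route uses \qr{irred} (positivity of the $c_k$), whereas the lemma assumes only \qr{irredorig}, and the paper explicitly notes that it is unclear whether \qr{irred} follows from \qr{irredorig}. The paper's own proof avoids \qr{irred} altogether by a Perron--Frobenius-type argument exploiting the fact that $\M$ is a unital positive map: writing $B=\sum_i\lambda_iP_i$ and $\M(B)=\sum_j\gamma_jQ_j$, each eigenvalue $\gamma_j$ is a convex combination $\sum_ic_{ij}\lambda_i$ with $c_{ij}=\scp{\M(P_i)}{Q_j}/\scp{Q_j}{Q_j}\geq0$ and $\sum_ic_{ij}=1$; a modulus-one eigenvalue forces equality in this averaging, hence $\M(P_1)=P_1$ for a spectral projection $P_1$ of $B$, hence a common invariant subspace for the $\a_s^*$, contradicting \qr{irredorig} unless $P_1=I$, which is excluded by $\scpe BI=0$. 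That positivity argument is the missing ingredient you would need; as written, your proposal either proves nothing (the key inequality is absent) or, at best, would prove the lemma only under the stronger assumption \qr{irred}.
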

In order to show \qr{gol1} it is enough, by \qr{LonV0}, to consider the operator $\M$ in \qr{Mdef}
acting on $\B$. Since any element in $\B$ uniquely can
be represented as an orthogonal sum of an symmetric and anti-symmetric operator, it
suffices to analyse the action of $\M$ restricted to $\mscr D$, since the map $B\mapsto i\cdot B$ is an
isomorphism between the space $\mscr D$ and the $\M$-invariant space
of anti-Hermitian matrices. 

\subsubsection{Proof of Lemma~\ref{c}}
We can take $\theta_1$ as the maximum eigenvalue of the operator $\M$
restricted to $\mscr D$.

For a Hermitian operator $B\in\mscr D$, let
$\sigma(B)=\{\lambda_i\}\subset \RR$ denote its spectrum.  Let
$\sigma(\M(B))=\{\gamma_j\}\subset \RR$ denote the spectrum of
$\M(B)\in\mscr D$.  We have the spectral decompositions
\begin{equation}\label{specdec}
  B = \sum_{i} \lambda_i P_i \quad\text{and}\quad
  \M(B)=\sum_i \lambda_i \M(P_i)=\sum_{j} \gamma_j Q_j,
\end{equation}
where $P_i$ and $Q_j$ refers to systems of orthogonal projections such
that $\sum_i P_i=\sum_j Q_j=I$.  Let, for the moment,
$\scp AB:=\Tr(AB^*)$ denote the Hilbert-Schmidt scalar product on
$\B=\B(H)$. Then $\{P_i\}$ and $\{Q_j\}$ are orthogonal sets under
$\scp\cdot\cdot$. (We have $P_iP_{i'}=0$ if $i\not=i'$.)  Taking the
orthogonal projection in the direction of $Q_j$ of both sides in
\qr{specdec} gives, for each $j$, the equation
\begin{equation}\label{eqlg}
  \left(\sum_{i} c_{ij} \lambda_i\right) Q_j   = \gamma_j Q_j
\end{equation}
where
\begin{equation}\label{cijsum}
  c_{ij} =
  \frac{\scp{\M(P_i)}{Q_j}}{\scp{Q_j}{Q_j}} \quad\text{and}\quad
  \sum_{i} c_{ij} = \frac{\scp{I}{Q_j}}{\scp{Q_j}{Q_j}}=1,
\end{equation}
since $\sum_i \M(P_i)=\M(I)=I$ by \qr{fundeq2}.  The
coefficients $c_{ij}$ cannot be negative, since
\begin{equation}
  \scp{\M(P_i)}{Q_j}=\Tr(\M(P_i)Q_j)=\Tr(Q_j\,\M(P_i)\, Q_j)\geq0.\label{positive}
\end{equation}
This is a consequence of the positivity of $\M$, i.e.\ that $\M$
preserves the cone of positive semidefinite matrices so that
$\M(P_i)$, and hence $Q_j\M(P_i)\,Q_j$, both are positive
semi-definite.

Thus \qr{eqlg} expresses each eigenvalue $\gamma_j$ of $\M(B)$ as a
convex combination of the eigenvalues $\{\lambda_j\}$. In particular,
if we order them so that $\gamma_1 > \gamma_2 > \dots $ and
$\lambda_1>\lambda_2>\dots$ then
$$\gamma_1 =  \sum_i c_{i1}\lambda_i \leq \lambda_1. $$

Since $\M$ is an operator on finite dimensional space, there is some
eigenvector $B\in\mscr D$ corresponding to $\theta\in\sigma(\M)$ of
maximum modulus.  If we assume that $B$ is an eigenvector of $\M$ then
$\gamma_j=\theta \lambda_j$ and we can assume that $Q_j=P_j$.

If we assume that $|\theta|=1$, then it must hold that $c_{11}=1$ and
$c_{i1}=0$ for $i=2,\dots$. But that is equivalent to the equalities
\begin{equation}\label{scpPQ}
  \scp{\M(P_1)}{P_1}= \scp{P_1}{P_1}\quad\text{and}\quad
  \scp{\M(P_1)}{I-P_1}= 0. 
\end{equation}
A consequence of positivity is that $P_i \M(I-P_i) P_i \geq0$ and
thus, since $\M(I)=I$, that $0\leq P_i\M(P_i)P_i\leq P_i$. Hence, it
follows from \qr{scpPQ} that, in fact, $\M(P_1)=P_1$.

Moreover, since each term in the sum
$$\M(P_1) = \sum_s \a_s\, P_1\a_s^* = P_1$$ is
positive definite, it follows that $\a_s^* (W) \subset W$ for all $s$,
where $W$ is the range of $P_1$. This contradicts the irreducibility
condition \qr{irredorig} unless $P_1=I$. However, that would imply
that $B$ is a scalar multiple of $I$, which contradicts the condition $\scpe BI=0$.  \qed

\subsection{Strict contraction in Schatten norms}\label{sec:strictsym}

If the operators $\{\a_s\}$ are \emph{symmetric}, then the scalar
product $\scpe \cdot\cdot$ is proportional to $\scp\cdot\cdot$. In
other words $\E=\frac 1d I$, where $d$ is the dimension of $H$ and, in
particular, we have the identity 
\begin{equation}\label{eq:MHS}
  \Tr(\M(B)) = \scp {\M(B)}{I}=\scp BI = \Tr(B),
\end{equation}
since this holds for $\scpe\cdot\cdot$. 

In this case, we obtain strict contractivity of $\M$ for all
Schatten norms $\norm\cdot_p$ on $\mscr D$, which we define for $p\geq
1$ as 
$$\norm A_p = \Tr( |A|^p)^{1/p} $$
where $|A|$ denotes the positive part of the operator $A$, i.e.\ the
unique positive definite operator $|A|$ such that $A=|A|R$ for some
orthonormal operator $R$.  For any Hermitian matrix $A$ we can write
$|A|=\sum_i |\lambda_i| P_i$, where $A=\sum_i \lambda_i P_i$ is the
spectral decomposition of $A$. The norm
$\norm A_p$ can then be expressed as 
$$\norm A_p=\left(\sum_i |\lambda_i|^p \scp {P_i}{P_i}\right)^{1/p}.$$
 
\begin{lemma}\label{lem:strict}
  Assume that the restriction maps $\a_s$ are all symmetric.  For all
  $p\geq 1$, there is a constant $\theta_{1,p}$, $0<\theta_{1,p}<1$,
  such that
\begin{equation}\label{strictcontr}
\norm{\M(B)}_p \leq \theta_{1,p} \norm{B}_p,
\end{equation}
for all $B\in\mscr D$. 
\end{lemma}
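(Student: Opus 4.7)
My plan is to combine the bistochastic structure of $\M$ with the strict Hilbert--Schmidt contraction implicit in the symmetric case. Since the $A_s$ are symmetric, $\M=\M^*$ is self-adjoint for the Hilbert--Schmidt inner product; moreover \eqref{fundeq2} makes $\M$ unital and \eqref{eq:MHS} makes it trace preserving, so $\M$ is bistochastic on the Hermitian matrices. Self-adjointness forces the HS operator norm of $\M$ restricted to the invariant subspace $\mscr D$ to equal its spectral radius, which the proof of Lemma~\ref{c} bounds by some $\theta_1<1$; thus the single-step estimate $\norm{\M(B)}_2\le\theta_1\norm{B}_2$ for $B\in\mscr D$ is already at hand.

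First I would establish the nonstrict bound $\norm{\M(B)}_p\le\norm{B}_p$ for Hermitian $B$ and every $p\ge 1$. Using the spectral decompositions $B=\sum_i\lambda_iP_i$ and $\M(B)=\sum_j\gamma_jQ_j$ with distinct eigenvalues as in the proof of Lemma~\ref{c}, one has $\gamma_j=\sum_i c_{ij}\lambda_i$ with $c_{ij}\ge 0$ and $\sum_i c_{ij}=1$, while \eqref{eq:MHS} gives the dual relation $\sum_j c_{ij}\Tr(Q_j)=\Tr(P_i)$. Applying convexity of $t\mapsto|t|^p$, weighting by $\Tr(Q_j)$, and summing yields
\[
\norm{\M(B)}_p^p=\sum_j\Tr(Q_j)|\gamma_j|^p\le\sum_{i,j}\Tr(Q_j)c_{ij}|\lambda_i|^p=\sum_i\Tr(P_i)|\lambda_i|^p=\norm{B}_p^p.
\]

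For $p\in(1,\infty)$, strict convexity of $t\mapsto|t|^p$ turns this into strict inequality unless, for each $j$, all $\lambda_i$ with $c_{ij}>0$ coincide with $\gamma_j$; this pairing, combined with $\sum_j c_{ij}\Tr(Q_j)=\Tr(P_i)$, forces the dimension-weighted multisets of eigenvalues of $B$ and of $\M(B)$ to agree, and in particular $\norm{\M(B)}_2=\norm{B}_2$. By the HS strict contraction this is possible only when $B=0$, and a standard compactness argument on the unit sphere of $\mscr D$ in the Schatten $p$-norm then produces $\theta_{1,p}<1$ with $\norm{\M(B)}_p\le\theta_{1,p}\norm{B}_p$.

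The main obstacle is the endpoint cases $p\in\{1,\infty\}$ where strict convexity fails. For $p=1$ I would work with the Jordan decomposition $B=B_+-B_-$ into positive parts of orthogonal support: positivity and trace preservation give
\[
\norm{\M(B)}_1\le\norm{\M(B_+)}_1+\norm{\M(B_-)}_1=\Tr(B_+)+\Tr(B_-)=\norm{B}_1,
\]
with equality iff $\M(B_+)$ and $\M(B_-)$ have orthogonal ranges. Ruling this out for nonzero $B\in\mscr D$ is the delicate point: the persistence of such an orthogonality under iterates of $\M$, combined with the Kraus representation $\M(X)=\sum_sA_sXA_s$ of the symmetric case, would produce a nontrivial orthogonal splitting of $H$ preserved by $\{A_s\}$, contradicting the irreducibility \eqref{irredorig}. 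The case $p=\infty$ then follows from $p=1$ by the Hilbert--Schmidt duality $\norm{\M\vert_{\mscr D}}_{\infty\to\infty}=\norm{\M\vert_{\mscr D}}_{1\to 1}$, which holds because $\M=\M^*$.
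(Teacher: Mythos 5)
Your core computation is the paper's: the same spectral decompositions $B=\sum_i\lambda_iP_i$, $\M(B)=\sum_j\gamma_jQ_j$, the same stochastic matrix $c_{ij}=\scp{\M(P_i)}{Q_j}/\scp{Q_j}{Q_j}$, and the same convexity step, with \eqref{eq:MHS} closing the sum to $\norm{B}_p^p$. Where you genuinely diverge is the strictness analysis. The paper disposes of it in one sentence (``irreducibility implies Jensen must be strict, then compactness''), whereas for $1<p<\infty$ you reduce the equality case to $p=2$ and invoke the fact that in the symmetric case $\M$ is Hilbert--Schmidt self-adjoint, so its operator norm on $\mscr D$ equals its spectral radius, which Lemma~\ref{c} shows is $\theta_1<1$. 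That reduction is correct and arguably tighter than what the paper writes, since it replaces an appeal to irreducibility by a quantitative one-step bound already in hand.

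The genuine gap is at $p=1$ (and by your duality, $p=\infty$) --- and note that $p=1$ is exactly the case the paper uses in Theorem~\ref{cor4}, where $\theta_1=\theta_{1,1}$. Your equality criterion is right: $\norm{\M(B)}_1=\norm{B}_1$ forces $\M(B_+)$ and $\M(B_-)$ to have orthogonal supports. But the contradiction with \eqref{irredorig} is only gestured at, and the mechanism you invoke --- ``persistence of such an orthogonality under iterates of $\M$'' --- is not available: equality of $1$-norms at one step does not propagate, so you cannot iterate to manufacture an invariant splitting. Nor can you fall back on eventual contraction: $\norm{\M^k(B)}_1\leq\sqrt{d}\,\theta_1^k\norm{B}_1$ follows from the $p=2$ bound and norm equivalence, but the compactness argument needs the \emph{one-step} strict inequality $\norm{\M(B)}_1<\norm{B}_1$ for every nonzero $B\in\mscr D$, which eventual contraction does not give. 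What is actually needed is a direct one-step argument that $\sum_s \a_s(\operatorname{supp}B_+)$ and $\sum_s \a_s(\operatorname{supp}B_-)$ cannot be orthogonal for an irreducible symmetric system (one must also handle the case where $B$ has a kernel, so that $\operatorname{supp}B_+\oplus\operatorname{supp}B_-\neq H$). In fairness, the paper's own proof is equally terse at precisely this point, so you have correctly located where the real work lies; but as written your endpoint case is not a proof.
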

From the explicit action of $\M$ given in \qr{SGM} it is clear that
$\theta_{1,p} = 4/5$, for all $p$, if we restrict to the particular
case of the Sierpi\'nski gasket.

\begin{proof}[Proof of Lemma~\ref{lem:strict}]
  We use the notation from the argument showing Lemma~\ref{c}. 
  From the fact that $\gamma_j$ can be expressed as a convex
  combination of the $\lambda_i$s, we obtain
  \begin{align*}
    \norm{\M(B)}_p^p &= \sum_j \left|\sum_i c_{ij}\lambda_i\right|^p
                       \scp{Q_j}{Q_j} 
                    \leq \sum_j\sum_i c_{ij} |\lambda_i|^p\scp{Q_j}{Q_j}
  \end{align*}
 using Jensen's inequality.

 The definition $c_{ij}=\scp{\M(P_i)}{Q_j}/\scp{Q_j}{Q_j}$ of $c_{ij}$
 shows that
  \begin{align*}
    \norm{\M(B)}_p^p &\leq \sum_i |\lambda_i|^p \left(\sum_j\scp{\M(P_i)}{Q_j}\right)
        = \sum_i |\lambda_i|^p \scp{\M(P_i)}{I}\\
        &= \sum_i |\lambda_i|^p \scp{P_i}{P_i} = \norm B_p^p
  \end{align*}
  on account of \qr{eq:MHS}. 

  The irreducibility condition implies that Jensen's inequality must
  be strict for all $B$. Compactness leads us to deduce that $\M$ is a
  strict contraction on $\mscr D$ in the norm $\norm\cdot_p$.
\end{proof}

\section{Proofs of the main results}
\subsection{Proof of Theorem 1}
Consider the construction of $L^2(\X,\mu)$ given in section
\ref{sec:mgalconstr} above. In the case when the measure considered is
the Kusuoka measure $\nu$ for the system $\{\a_s\}$, we can represent
a function $f(x)\in L^2(\X,\nu)$ with an operator valued process limit
in $\VV$ by
\begin{equation}
  \Phi(f)(\al) = f(\al) \Al,\label{Phidef}
\end{equation}
where $f(\al)$ is the martingale process corresponding to $f$.  This
becomes an \emph{isometry}, since we have
\begin{equation}\label{Phiisiso}
  \mscr F(\al)(f(\al),g(\al)) = \nu(\al) f(\al)g(\al) = 
  \scpe {f(\al)\Al}{g(\al)\Al}. 
\end{equation}
Hence, $\Phi$ gives an isometric representation of $\LL=L^2(\X,\nu)$
as a closed subspace in $\VV$.

Notice that $\Phi$ preserves the grading, so that
$\deg(\Phi(f))=\deg(f)$ for $f\in \LL_*$ and $\Phi(f)\in\VV_*$.  Hence
the representations of $f\in \LL_\gamma$ as $\Phi(f)\in\VV_\gamma$ are
isometric as well.  It is also clear that $\Phi$ commutes with the
shift operator, i.e.\
$$ 
\Phi(\T f)(s \al) = \A(s\al) f(\al) = \a_s\Phi(f)(\al) = \T
\left(\Phi(f)(s\al)\right).
$$
From now on we view the space $\LL$ as a closed subspace of $\VV$ and
drop the explicit use of $\Phi$.

For the qualitative results of this paper, it is inessential if we
replace \qr{irred} by the stronger condition that
\begin{equation}\label{irred2}
  c:=\inf_{F} \sum_s \scpe{\a_sF\a_s^*}{I}^2 >0,
\end{equation}
and then take $\theta_2=(1-c)^{1/2}$.  If \qr{irred} holds for a
certain $k>1$, we can then use an ``amalgamated'' symbolic space based
on the symbols $S'=S^k$ and the maps by $\a'_{s'}=\A(s')$, $s'\in S'$.
Moreover, using $\gamma'=\gamma^k$ will relate the $\gamma$-norms on
the original and the amalgamated system.

Let $\Q$ be the orthogonal projection of $\VV$ onto $\LL$.  The
transfer operator $L$ on $\LL$, is defined by the duality
\qr{dualityL}. By the (implied) isometry $\Phi$, we have
\begin{equation}\label{TLdual}
  \scp {L f}{g}_\LL = 
  \scp{f}{\T g}_\LL = 
  \Scp{f}{\T g} = \Scp{\L f}{g} 
  = \scp{\Q\circ\L f}{g}_\LL.
\end{equation}
The operator $L:\LL \to\LL$ can hence be expressed as the composition
$L= \Q \circ \L$.

From using $\L^k$ and $\T^k$ instead of $\L$ and $\T$ in \qr{TLdual},
we deduce furthermore that
$$L^k=\Q\circ\L^k, \quad \text{for $k\geq 1$.}$$ 
It follows that $R(s) = (L-s\ett)^{-1}$ is given by
$\Q \circ \mscr R(s)$, where $\mscr R(s) = (\L-s\ett)^{-1}$. If $\Q$
is a continuous map on $\VV_\gamma$ then
$\| R(s) \|_\gamma \leq C \| \mscr R(s) \|$.  The spectrum $\sigma(L)$
of $L$ is therefore contained in the spectrum $\sigma(\L)$ of $\L$.
Since Theorem~\ref{contr1} states that $\L\vert_{\VV_\gamma}$ has a
spectral gap for all $\gamma<1$, we deduce that Theorem~\ref{thm1}
holds for those $\gamma$ such that the operator $\Q$ is a well defined
and continuous on $\VV_\gamma$.

For a process $G(\alpha)\in\VV$, the projected process $\Q G$ has
explicitly the form
\begin{equation}\label{phidef0}
  (\Q G)(\al) = \nu(\al)^{-1} \scpe{G(\al)}{\Al}\Al.
\end{equation}
In other words, the value $\Q G(\al)$ is, locally for each $\alpha$,
the $\scpe\cdot\cdot$-orthogonal projection of the value $G(\al)\in\B$
onto the line in $\B$ spanned by $\Al$.  The explicit form
\qr{phidef0} follows since an orthornomal basis for
$\LL_n=\VV_n \cap \LL$ is given by
$$
\{\nu^{-1/2}(\beta) \A \, \ett_{\beta} : \beta\in S^n\, \},
$$
where $\ett_\beta$ denotes the real-valued process
$$
\ett_\beta(\al) =
\begin{cases} 1 & \text{if $\al = \beta\gamma$ for some $\gamma\in S^*$} \\
  0 &\text{otherwise}.
\end{cases}
$$
We have $\scpe{\Al}{\Al}=\nu(\al)$ so, from \qr{phidef0}, we deduce
that the squared norm $\| \Q_m F \|^2$ of a projection can be
expressed as
\begin{equation}\label{normQF}
  \| \Q_m F \|^2 = \sum_{\beta\in S^m} \scpe{F(\beta)}{\A(\beta)}^2.
\end{equation}
That the projection $\Q$ is continuous as a projection operator
between $\VV_\gamma$ and $\LL_\gamma$ is a nontrivial result since the
projection does not preserve the grading: The image of $\VV\ii k$
under $\Q$ spreads out on the spaces $\LL\ii j = \LL\cap \VV\ii j$,
for $j\geq k$. Hence, the norm $\| \Q F \|_{\VV_\gamma}$,
$F\in\VV_\gamma$, is not necessarily bounded in terms of
$\| F \|_{\VV_\gamma}$. We state and prove it as a lemma.

\subsection{The continuity of the projection}\label{cont-l}
\begin{lemma}[Continuity of $\Q$]\label{thmD}
  Let $\theta_2 = \sqrt{1-c}$ where $c$ is the constant in the
  irreducibility condition \qr{irred2}.  For any fixed $k$ and any
  $G\in\VV\ii k$
  $$ \| (\Q G)\ii j \| \leq \theta_2^{j-k} \| G \|. $$
\end{lemma}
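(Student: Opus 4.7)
The plan is to prove the bound by induction on $j-k$, with the single-step contraction provided by the irreducibility condition \qr{irred2}.

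First I would reduce to $j \geq k$. For $j \leq k-1$, since $G \in \VV\ii{k}$ is orthogonal to $\VV_{k-1}$, and the isometric embedding $\Phi$ places $\LL_{j} \subset \VV_{j} \subset \VV_{k-1}$, we have $\Q_{k-1}G = 0$ and hence $(\Q G)\ii{j} = 0$. For $j = k$, a cylinder-wise Cauchy--Schwarz applied to the orthonormal system $\{\nu^{-1/2}(\beta)\A\ett_{\beta}\}_{\beta \in S^{k}}$ gives $\|\Q_{k}G\|^{2} = \sum_{\al \in S^{k}} \scpe{G(\al)}{\A(\al)}^{2}/\nu(\al) \leq \sum_{\al} \norme{G(\al)}^{2} = \|G\|^{2}$, which matches the trivial $\theta_{2}^{0} = 1$.

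For the inductive step ($j \geq k+1$) I would write $\|(\Q G)\ii{j}\|^{2} = \sum_{\al \in S^{j-1}} V(\al)$ as a sum of local variances and decompose $G(\al) = \lambda_{\al}\A(\al) + R_{\al}$ with $\lambda_{\al} = \scpe{G(\al)}{\A(\al)}/\nu(\al)$ and $\scpe{R_{\al}}{\A(\al)} = 0$. For $\ell(\al) \geq k$, the $\VV\ii{k}$-extension rule expresses $G(\al s)$ and $\A(\al s)$ as right-multiplications by $\a_{s}$, and a direct calculation yields
\[
V(\al) = \sum_{s \in S} \frac{w_{s}^{2}}{z_{s}}, \qquad w_{s} = \scpe{R_{\al}P_{s}}{\A(\al)}, \quad z_{s} = \nu(\al s), \quad P_{s} = \a_{s}\a_{s}^{*},
\]
with $\sum_{s} w_{s} = \scpe{R_{\al}}{\A(\al)} = 0$.

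The crux of the proof is the uniform pointwise contraction
\[
V(\al) \leq (1 - c)\norme{R_{\al}}^{2},
\]
where $c$ is the irreducibility constant of \qr{irred2}. Plain Cauchy--Schwarz on the positive semidefinite form $(X,Y) \mapsto \scpe{XP_{s}}{Y}$ only delivers $V(\al) \leq \norme{R_{\al}}^{2}$; the strict gap $c$ arises because saturation would require each $R_{\al}P_{s}$ to align with $\A(\al)P_{s}$, which under a reparametrisation $R_{\al} = \A(\al)F_{\al}$ (forcing $\scpe{F_{\al}}{I} = 0$ from $\scpe{R_{\al}}{\A(\al)} = 0$ together with the bi-invariance \qr{scpfundeq1}) corresponds exactly to the vanishing of $\sum_{s}\scpe{\a_{s}F_{\al}\a_{s}^{*}}{I}^{2}$, contradicting \qr{irred2}. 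A compactness argument on the unit sphere of $\scpe\cdot\cdot$-traceless symmetric elements of $\B$ then converts the strict inequality into a uniform gap $c > 0$ valid for every $\al$.

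Finally, I would iterate using the Pythagorean recursion for the extended residuals $R_{\al s} := R_{\al}\a_{s} - (w_{s}/z_{s})\A(\al s)$, which gives
\[
\norme{R_{\al s}}^{2} = \norme{R_{\al}\a_{s}}^{2} - w_{s}^{2}/z_{s}, \qquad \sum_{s}\norme{R_{\al}\a_{s}}^{2} = \norme{R_{\al}}^{2}
\]
(obtained by orthogonalising $R_{\al}\a_{s}$ against $\A(\al s)$ and using $\sum_{s}P_{s} = I$). Setting $E_{m} := \sum_{\al \in S^{m}}\norme{R_{\al}}^{2}$, one obtains $E_{j} = E_{j-1} - \|(\Q G)\ii{j}\|^{2}$, and combining this balance with the pointwise contraction closes an induction on $j-k$, delivering $\|(\Q G)\ii{j}\|^{2} \leq \theta_{2}^{2(j-k)}\|G\|^{2}$. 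The hardest point -- and the reason the constant $\theta_{2} = (1-c_{k})^{1/k}$ is suboptimal, as flagged in the remark after Theorem~\ref{thm1} -- is the uniformity in $\al$ of the pointwise contraction: the reparametrisation $R_{\al} = \A(\al)F_{\al}$ can distort $\scpe\cdot\cdot$-norms when $\A(\al)$ is nearly singular, and managing this non-uniformity is the principal technical difficulty of the argument.
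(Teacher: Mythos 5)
Your local computations are correct and match the paper's: your $R_\al$ is exactly the paper's residual $Z^{[j-1]}(\al)=G(\al)-(\Q_{j-1}G)(\al)$, the formula $\|(\Q G)\ii j\|^2=\sum_{\al\in S^{j-1}}V(\al)$ with $V(\al)=\sum_s w_s^2/z_s$ is right, and so are the Pythagorean identities $\sum_s\norme{R_{\al s}}^2=\norme{R_\al}^2-V(\al)$ and $E_j=E_{j-1}-\|(\Q G)\ii j\|^2$. The gap is in the assembly. Your scheme bounds the level-$j$ capture from \emph{above} by $(1-c)$ times the \emph{full} residual energy $E_{j-1}=\|G-\Q_{j-1}G\|^2$. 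But $E_m$ does not tend to zero: it decreases to $\|G-\Q G\|^2$, which is positive for generic $G$ (indeed $E_m\equiv\|G\|^2$ if $G\perp\LL$). Hence the chain $\|(\Q G)\ii j\|^2\leq(1-c)E_{j-1}\leq(1-c)E_{j-2}\leq\dots\leq(1-c)\|G\|^2$ produces exactly one factor of $(1-c)$, uniformly in $j$, and the induction on $j-k$ does not close; to extract $(1-c)^{j-k}$ you would need $E_{j-1}\leq(1-c)E_{j-2}$, i.e.\ that the capture is at least $c$ times the \emph{full} residual, which is false in general. The quantity that genuinely decays geometrically is the \emph{projectable} tail $\|\Q G-\Q_{j-1}G\|^2=\sum_{m\geq j}\|(\Q G)\ii m\|^2$, and the inequality one must prove is a \emph{lower} bound on the capture relative to that tail, namely \qr{g1}: $\|(\Q G)\ii j\|^2\geq c\,\|\Q G-\Q_{j-1}G\|^2$. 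That is the direction the irreducibility condition \qr{irred2} (a lower bound on $\sum_s\scpe{\a_sF\a_s^*}{I}^2$) actually supplies; it cannot by itself deliver the upper bound $V(\al)\leq(1-c)\norme{R_\al}^2$ you rely on.

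The paper's proof runs precisely this other way. It localises at $\al$, sets $X_0=Z^{[n]}(\al)\A(\al)^*$ --- always well defined, with no invertibility of $\A(\al)$ required --- notes that $\scpe{X_0}{I}=\scpe{Z^{[n]}(\al)}{\A(\al)}=0$, and uses \qr{absc} to transport the entire tail to the constant process $X=\A X_0\in\VV\ii0$. Then the level-one capture equals $\sum_s\scpe{\a_sX_0\a_s^*}{I}^2\geq c\,\norme{X_0}^2$ by \qr{irred2} (after passing to the symmetric part), while the projectable tail satisfies $\|\Q X\|^2\leq\|X\|^2=\norme{X_0}^2$ simply because $\Q$ is an orthogonal projection. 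This simultaneously runs the inequality in the useful direction and dissolves the uniformity-in-$\al$ problem you flag at the end: your left factorisation $R_\al=\A(\al)F_\al$ degenerates when $\A(\al)$ is nearly singular, whereas right multiplication by $\A(\al)^*$ costs nothing. To repair your argument, keep your recursion but apply it to the projectable tail $\|\Q Z^{[m]}\|^2$ rather than to $E_m$, and replace the claimed pointwise upper bound by the capture lower bound obtained from this substitution.
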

In particular, we get, for any $G\in\VV_\gamma$, that
$$ \| \Q G \|_\gamma \leq \frac{1}{1-(\theta_2/\gamma)} \cdot \| G \|_\gamma, $$
provided $\gamma > \theta_2$.
Let $\Q_n$ denote the orthogonal projection onto the closed subspace
$\LL_n$. For $G$ in $\VV\ii k$, we have $\Q_n G = 0$ for $n<k$.  Let
$$Z^{[n]} = G-\Q_n G, \quad n\geq k $$
so that $\Q Z^{[n]} = \sum_{m>n} (\Q G)\ii m $ and
$(\Q G)\ii {n+1}=\Q_{n+1} Z^{[n]}$.
\newline

\noindent
{\bf Proof of Lemma~\ref{thmD}.}\newline
It is enough to show that, with $c$ as in the irreducibility condition
\qr{irred}, we have, for all $n>k$, that
\begin{equation}\label{g1}
  \| \Q_{n+1} Z^{[n]} \|^2 \geq c \| \Q Z^{[n]} \|^2.
\end{equation} 
By induction and orthogonality of $\Q_{n+1} Z^{[n]}$ and
$\Q Z^{[n+1]}$, we obtain
$$
\| \Q Z^{[{n+1}]} \|^2 = \| \Q Z^{[n]} \|^2 - \| \Q_{n+1} Z^{[n]} \|^2
\leq \theta^2_2 \| \Q Z^{[{n}]} \|^2
$$
and the sought after statement in Lemma~\ref{thmD} follows by induction,
since $\| \Q Z^{[k]} \| \leq \|G\|$.

Any process $F$ of degree $\deg(F)\leq n$, has the orthogonal
decompositions $$F=\sum_{\alpha\in S^n} F\ett_\alpha$$ of ``localised''
processes. Furthermore, the projections $\Q_m$, $m\geq 1$, respect
this localisation, i.e.\ $\Q_m F$ is the orthogonal sum
$\sum_\alpha \Q_m(F\ett_\alpha)$. It follows that it is enough to show
that
\begin{equation}\label{ZZgoal}
  \|\Q_{n+1} Z \|^2 \geq c \| \Q Z \|,
\end{equation}
for a part $Z=Z^{[n]}\ett_{\al}$ of $Z$, where $\al\in S^n$ is fixed.

Furthermore, for $\beta=\alpha\gamma\in S^m$, where $m\geq n$, we have
\begin{equation}\label{absc}
  \scpe{Z(\beta)}{\A(\beta)} = 
  \Tr(\E Z(\beta)\A(\alpha)^*\A(\gamma)^*) =
  \scpe{Z(\alpha\gamma)\A(\alpha)^*}{\A(\gamma)}.
\end{equation}
It follows that
$$\|\Q_m Z\| = \|\Q_{m-n} X\|, \quad\text{for $m\geq n$},$$
where $X(\gamma)=\A(\gamma) X_0\in \VV\ii0$ is a constant process with
$X_0=Z(\alpha)\A(\alpha)^*$.  Moreover, it follows from \qr{absc} that
$\Q_n X = \Q_n \tl X$, where $\tl X(\al) =\Al \frac12(X_0+X_0^*)$,
i.e\ the process based on the symmetric part of $X_0$: If $W\in \B$ is
anti-symmetric and $Y=\A W$ then $\Q Y=0$ since
$$ \scpe {\Al W}{\Al}=\Tr(\E\Al W \Al^*)=0. $$ 

It follows that to show \qr{ZZgoal} is equivalent to showing that
\begin{equation}\label{XXgoal}
  \| \Q_1 X \|^2 =  \sum_s \scpe{\a_s X_0 \a_s^*}{I}^2  \geq c \| \Q X \|^2, 
\end{equation}
where $X=\A X_0\in\VV\ii 0$ and $X_0$ is the symmetric part of
$Z^{[n]}(\al)\Al^*$. Since $Z^{[n]} = G - \Q_n G$, we moreover have
that
$$\scpe {X_0}I=\scpe{Z^{[n]}(\al)}{\Al}=0. $$ 
But, since $\Q$ is a projection, we have
$$ \| \Q X \|^2 \leq \| X \| = \norme{X_0}^2, $$
and thus \qr{XXgoal} is a direct consequence of the strong
irreducibility condition \qr{irred}. \qed

\subsection{Proof of Theorem \ref{cor4}}

\def\G{\varGamma}
\def\be{\beta}

In the case of the of the Sierpi\'nski gasket and its generalizations, the family $SG_n$, the restriction maps
$\a_s$, $s\in S$, are symmetric. It follows that the $\norme A$ is
$1/d$ times the Hilbert-Schmidt norm. Moreover, as is shown in
section \ref{sec:strictsym}, the symmetry of $\a_s$ also implies that $\M$ contracts
strictly in the Schatten-norms.

It follows directly from \qr{fundeq1} that
$$ H_n(x) =  \frac{A_n(x)^* \E A_n(x)}{\Tr(A_n(x)^*\E A_n(x))} $$ 
is a positive semi-definite and bounded matrix-valued $\nu$-martingale
process that, by the Martingale Convergence Theorem, converges $\nu$-almost everywhere to a limit $H(x)$ such that  
\begin{equation}\label{xg1} 
  \Tr(H(x))=1. 
\end{equation}
We can write 
$$ \nu(\al \mid [x]_n ) = 
\frac{\Tr(\E A_n(x)\Al \Al^* \A_n(x)}{\Tr(A_n(x)^*\E A_n(x))}
= \Tr(H_n(x) \Al\Al^*). $$ 
If we take the limit $n\to\infty$ we obtain
\begin{equation}\label{xxwx}
\nu(\al|x) = \Tr\left(H(x) \A(\al) \A(\al)^* \right) 
\end{equation}
$\nu$-almost everywhere. 

Assume now that $f$ is $\CF_k$-measurable function, $f(x)=f(\al)$ for
$\al\in S^{[0,k)}$.  By linearity of trace,
\begin{align*}
L^{m+k} f(x) &=\sum_{\substack{\al\in S^k\\ \be\in S^{[k,m+k)}}}
  \nu(\al\be|x)f(\al\be x) \\
  & =\sum_{\al}
\Tr\left(H(x) \left(\sum_\be \A(\be)
\A(\al) \A(\al)^* \A(\be)^*\right) \right) f(\al) \\
&=\sum_\al \opn{Tr} \left(H(x) \M^{m}\left(\A(\al) \A(\al)^*\right)\right)f(\al).
\end{align*}
Since $\M^m(I)= I$ and $\Tr(H(x))=1$, we obtain that 
\begin{align*}
L^{m+k} f(x) - \int f\d\nu 
&=\sum_\al \opn{Tr} \left(H(x) \M^{m}\left(\A(\al) \A(\al)^* - \nu(\al)I \right)\right)f(\al).
\end{align*}
For the proof of Theorem~\ref{cor4}, it thus remains to show that for
some constant $0<\theta_1<1$
\begin{equation}\label{goalfx}
\left| \Tr\left(H(x)\M^{m}(\A(\al) \A(\al)^* -\nu(\al)I)\right) \right|
\leq d \cdot \theta_1^{m} \cdot \nu(\al),
\end{equation}
uniformly for all $x$ 

Let $ B_\al$ denote the symmetric
zero-trace matrix $ B_\al=\A(\al) \A(\al)^* - \nu(\al)I$. By H\"older's
inequality for the Schatten norms we have
\begin{equation}\label{xxxn}
\left|\Tr\left(H(x)\M^m( B_\al)\right)\right| \leq
\norm{H(x)}_\infty \cdot \norm{\M^m( B_\al)}_1
\end{equation}
We have $\norm{H(x)}_\infty \leq \norm{H(x)}_1 =1$, by \qr{xg1}, and 
\qr{strictcontr} implies that
$$\Tr\left(|\M^m( B_\al)| \right)\leq \theta_{1}^m \Tr(| B_\al|) =
d\cdot\theta_{1}^m\, \Tr(\E| B_\al|), $$
where $\theta_1=\theta_{1,1}$. We then see that \qr{goalfx} follows from the estimate
$$ \Tr(\E | B_\al|) = \Tr(\E \cdot |\A(\al) \A(\al)^* -\nu(\al)I|) 
\leq \Tr(\E \A(\al) \A(\al)^*) =\nu(\al). $$
\qed


\begin{thebibliography}{999}
\vskip0.5cm
\bibitem{str1} J.\ Azzam, M.A.\ Hall and R.S.\ Strichartz, {\em
    Conformal energy, conformal Laplacian, and energy measures on the
    Sierpi\'nski gasket}, Trans.\ Amer.\ Math.\ Soc.\ {\bf 360} (2008),
  2089--2131.

\bibitem{str3} R.\ Bell, C.-W.\ Ho and R.S.\ Strichartz, {\em Energy
    measures of harmonic functions on the Sierpi\'nski gasket}, Indiana
  J.\ Math.\ {\bf 63} (2014), no.\ 3, 831--868.

\bibitem{gallo} S.\ Gallo and F.\ Paccaut, {\em On non-regular
    $g$-measures}, Nonlinearity {\bf 26} (2013), 763--776.
    
\bibitem{grigor}
A.\ Grigor'yan, {\em Heat kernels and function theory on metric measure spaces},
Contemp.\ Math.\ {\bf 338} (2003), 143--172.    

\bibitem{ion} M.\ Ionescu, L.G.\ Rogers and A.\ Teplyaev, {\em Derivations
  and Dirichlet forms on fractals}, J.\ Funct.\ Anal.\ {\bf 263} (2012)
  (8), 2141--2169.

\bibitem{jonsson}
A.\ Jonsson, {\em Brownian motion on fractals and function spaces}, Math.\ Z.\ {\bf 222} (1996), 496--504.

\bibitem{kajino} N.\ Kajino, {\em Heat kernel asymptotics for the
    measurable Riemannian structure on the Sierpi\'nski gasket},
  Potential Anal.\ {\bf 36} (2012), no. 1, 67--115.
  
\bibitem{kigami1} J.\ Kigami, {\em Harmonic calculus on p.c.f.\
    self-similar sets}, Trans.\ Amer.\ Math.\ Soc.\ {\bf 335}(2)
  (1993), 721--755.

\bibitem{kigami} J.\ Kigami, Analysis on fractals, Cambridge Tracts in
  Mathematics {\bf 143}, Cambridge, 2001.

\bibitem{kigami2} J.\ Kigami, {\em Measurable Riemannian geometry on
    the Sierpi\'nski gasket: the Kusuoka measure and the Gaussian heat
    kernel estimate}, Math.\ Ann.\ {\bf 340}(4) (2008), 781--804.

\bibitem{kigami3} J.\ Kigami, {\em Resistance forms, quasisymmetric
    maps and heat kernel estimates}, Mem.\ Amer.\ Math.\ Soc.\ {\bf
    216} (2012).

\bibitem{kusuoka1} S.\ Kusuoka, {\em A diffusion process on a
    fractal. Probabilistic methods in mathematical physics}
  (Katata/Kyoto, 1985), 251--274, Academic Press, Boston, MA, 1987.

\bibitem{kusuoka2} S.\ Kusuoka, {\em Dirichlet forms on fractals and
    products of random matrices}, Publ.\ Res.\ Inst.\ Math.\ Sci.\
  {\bf 25} (1989), no. 4, 659--680.
  
\bibitem{mps} D.\ Peres-Garcia, F.\ Verstraete, M.M.\ Wolf and J.I.\
  Cirac, {\em Matrix product state representations}, Quantum Inf.\
  Comput.\ {\bf 7}, (2007), 401--430.

\bibitem{str} R.S.\ Strichartz, Differential Equations on Fractals,
  Princeton University Press, Princeton 2006.

\bibitem{str2} R.S.\ Strichartz and S.T.\ Tse, {\em Local behavior of
    smooth functions for the energy Laplacian on the Sierpi\'nski
    gasket}, Analysis {\bf 30} (2010), 285--299.
    


\end{thebibliography}
\end{document}